\setlist[enumerate]{leftmargin=*}
\newtheorem{thm}{Theorem}[section]
\newtheorem{prp}[thm]{Proposition}
\newtheorem{cor}[thm]{Corollary}
\newtheorem{lem}[thm]{Lemma}
\theoremstyle{definition}
\newtheorem{dfn}[thm]{Definition}
\newtheorem{rem}[thm]{Remark}
\numberwithin{equation}{section}
\newcommand{\tc}{\,:\,}
\newcommand{\defeq}{\mathrel{:=}}
\newcommand{\HLM}{\mathcal{M}}
\newcommand{\RR}{\mathbb{R}}
\newcommand{\CC}{\mathbb{C}}
\newcommand{\NN}{\mathbb{N}}
\newcommand{\ZZ}{\mathbb{Z}}
\newcommand{\Npos}{\mathbb{N}_+}
\newcommand{\Rpos}{\mathbb{R}^+}
\newcommand{\Rnon}{\mathbb{R}^+_0}
\newcommand{\schur}{\odot}
\newcommand{\pot}{\mathcal{P}}
\newcommand{\halfpot}{\mathcal{HP}}
\newcommand{\mm}{\mathrm{m}}
\newcommand{\uc}{\mathrm{uc}}
\newcommand{\cv}{\mathrm{cv}}
\newcommand{\matP}{\mathbf{P}}
\newcommand{\matA}{\mathbf{A}}
\newcommand{\matM}{\mathbf{M}}
\newcommand{\matN}{\mathbf{N}}
\newcommand{\matF}{\mathbf{F}}
\newcommand{\chr}{\mathbf{1}}
\DeclareMathOperator{\diag}{diag}
\DeclareMathOperator{\inc}{inc}
\DeclareMathOperator{\Card}{\#}
\DeclareMathOperator{\supp}{supp}
\DeclareMathOperator*{\esssup}{ess\,sup}
\newcommand{\sobolev}[2]{L^{#2}_{#1}}
\newcommand{\opL}{\mathcal{L}}
\newcommand{\opH}{\mathcal{H}}
\newcommand{\Kern}{\mathcal{K}}
\newcommand{\inv}{\leftarrow}
\DeclareFontFamily{U}{matha}{\hyphenchar\font45}
\DeclareFontShape{U}{matha}{m}{n}{
      <5> <6> <7> <8> <9> <10> gen * matha
      <10.95> matha10 <12> <14.4> <17.28> <20.74> <24.88> matha12
      }{}
\DeclareSymbolFont{matha}{U}{matha}{m}{n}
\DeclareFontFamily{U}{mathx}{\hyphenchar\font45}
\DeclareFontShape{U}{mathx}{m}{n}{
      <5> <6> <7> <8> <9> <10>
      <10.95> <12> <14.4> <17.28> <20.74> <24.88>
      mathx10
      }{}
\DeclareSymbolFont{mathx}{U}{mathx}{m}{n}
\DeclareMathDelimiter{\vvvert}{0}{matha}{"7E}{mathx}{"17}
\renewcommand{\vartheta}{\gamma}
\begin{document}

\title[Grushin operators in the plane]{An optimal multiplier theorem for Grushin operators in the plane, II}

\author{Gian Maria Dall'Ara}
\address[G.\ M.\ Dall'Ara]{
Istituto Nazionale di Alta Matematica ``Francesco Severi'' \\ 
Research Unit Scuola Normale Superiore \\ Piazza dei Cavalieri 7 \\ 56126 Pisa \\ Italy}
\email{dallara@altamatematica.it}
\author{Alessio Martini}
\address[A.\ Martini]{School of Mathematics \\ University of Birmingham \\ Edgbaston \\ Birmingham \\ B15 2TT \\ United Kingdom}
\email{a.martini@bham.ac.uk}

\thanks{The authors are members of the Gruppo Nazionale per l'Analisi Matematica, la Probabilit\`a e le loro Applicazioni (GNAMPA) of the Istituto Nazionale di Alta Matematica (INdAM)}

\keywords{Grushin operator, spectral multiplier, Schr\"odinger operator}
\subjclass[2020]{34L20, 35J70, 35H20, 42B15}

\begin{abstract}
In a previous work we proved a spectral multiplier theorem of Mihlin--H\"ormander type for two-dimensional Grushin operators $-\partial_x^2 - V(x) \partial_y^2$, where $V$ is a doubling single-well potential, yielding the surprising result that the optimal smoothness requirement on the multiplier is independent of $V$.
Here we refine this result, by replacing the $L^\infty$ Sobolev condition on the multiplier with a sharper $L^2$ condition. As a consequence, we obtain the sharp range of $L^1$ boundedness for the associated Bochner--Riesz means. The key new ingredient of the proof is a precise pointwise estimate in the transition region for eigenfunctions of one-dimensional Schr\"odinger operators with doubling single-well potentials.
\end{abstract}

\maketitle

\section{Introduction}

\subsection{Statement of the results}
In this paper we continue the analysis, begun in \cite{DM2}, of two-dimensional Grushin operators
\begin{equation}\label{eq:grushin}
\opL = -\partial_x^2 - V(x) \partial_y^2,
\end{equation}
where $V : \RR \to [0,\infty)$ is a ``single-well potential'' satisfying a scale-invariant regularity condition of order $1+\theta$. More precisely, we assume that $V$ is continuous, not identically zero, $C^1$ off the origin, and that, for some $\theta \in (0,1)$, the estimates
\begin{subequations}\label{eq:2dassumptions}
\begin{equation}\label{eq:2dassumptions_doubling}
V(-x) \simeq V(x) \simeq x V'(x),  
\end{equation}
\begin{equation}\label{eq:2dassumptions_holder}
|V'(xe^h)-V'(x)| \lesssim |V'(x)| \, |h|^\theta
\end{equation}
\end{subequations}
hold for all $x \in \RR \setminus\{0\}$ and $h \in [-1,1]$. 
Here we use the standard notation $A \lesssim B$ to denote the estimate $A \leq C B$ for some positive constant $C$, and $A \simeq B$ to denote the conjunction of $A \lesssim B$ and $B \lesssim A$; below we will also write $A \lesssim_s B$ or $A \simeq_s B$ to indicate that the implicit constants may depend on a parameter $s$.
We refer to the introduction of \cite{DM2} for a discussion of the scope of the assumptions \eqref{eq:2dassumptions}; here we limit ourselves to pointing out that they are satisfied by power laws $V(x)=|x|^d$ of any degree $d>0$ and appropriate perturbations thereof.

In \cite{DM2} we proved a spectral multiplier theorem of Mihlin--H\"ormander type for $\opL$, whose smoothness requirement is independent of $V$ and formulated in terms of an $L^\infty$ Sobolev norm of order $s>2/2$, that is, half the topological dimension of the underlying manifold.
The independence from $V$ of the smoothness requirement is particularly striking when compared, e.g., to the classical results based on heat kernel bounds \cite{hebisch_functional_1995,duong_plancherel-type_2002,robinson_analysis_2008}, which would give instead a condition of order $s > (2+d/2)/2$ in the case $V(x) \simeq |x|^d$.
We refer to the introduction of \cite{DM2} for an extensive discussion of the relevance of such result, in the context of a programme  (see also \cite{martini_mueller_golo}) aimed at understanding the optimal smoothness requirement in multiplier theorems for sub-elliptic operators.

While the result of \cite{DM2} is optimal, in the sense that the smoothness threshold $2/2$ cannot be lowered, it is still possible to refine it, by replacing the $L^\infty$ Sobolev norm with an $L^2$ Sobolev norm. This is the main result of the present paper. We write $L^q_s(\RR)$ to denote the $L^q$ Sobolev space of (fractional) order $s$ on $\RR$.

\begin{thm}\label{thm:main}
Let $\opL$ be the Grushin operator \eqref{eq:grushin} on $\RR^2$, where the coefficient $V$ satisfies the estimates \eqref{eq:2dassumptions}. Let $s>2/2$.
\begin{enumerate}[label=(\roman*)]
\item\label{en:main_l1} For all $\mm : \RR \to \CC$ such that $\supp \mm \subseteq [-1,1]$,
\[
\sup_{t > 0} \| \mm(t\opL) \|_{L^1 \to L^1} \lesssim_s \| \mm \|_{\sobolev{s}{2}}.
\]
\item\label{en:main_mh} Let $\eta \in C^\infty_c((0,\infty))$ be nonzero. For all $\mm : \RR \to \CC$ and $p \in (1,\infty)$,
\[
\| \mm(\opL) \|_{L^1 \to L^{1,\infty}} \lesssim_s \sup_{t>0} \| \mm(t \cdot) \eta \|_{\sobolev{s}{2}}, \qquad \| \mm(\opL) \|_{L^p \to L^p} \lesssim_{s,p} \sup_{t>0}\| \mm(t\cdot) \eta \|_{\sobolev{s}{2}}.
\]
\end{enumerate}
\end{thm}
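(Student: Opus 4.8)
The plan is to follow the strategy of \cite{DM2}: reduce both parts of the theorem, through the standard combination of dyadic decomposition, finite propagation speed and Calder\'on--Zygmund theory, to a single weighted $L^2$ estimate for the integral kernels of $\mm(\opL)$, and then put the genuinely new work into proving that estimate, where the new pointwise eigenfunction bound enters.

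The first step is to eliminate the $y$ variable. Conjugating $\opL$ by the partial Fourier transform $\mathcal{F}_y$ exhibits it as the direct integral $\int^{\oplus}_{\RR} H_\xi\,d\xi$ of the one-dimensional Schr\"odinger operators $H_\xi = -\partial_x^2 + \xi^2 V(x)$; by \eqref{eq:2dassumptions_doubling} the potential $\xi^2 V$ is single-well and tends to infinity, so $H_\xi$ has discrete spectrum, and writing $\lambda_n(\xi)$ and $\psi^\xi_n$ for its eigenvalues and real orthonormal eigenfunctions, the kernel of $\mm(\opL)$ takes the form
\[
K_{\mm(\opL)}\bigl((x,y),(x',y')\bigr) = \frac{1}{2\pi}\int_{\RR} e^{i(y-y')\xi}\sum_{n\geq 0}\mm\bigl(\lambda_n(\xi)\bigr)\,\psi^\xi_n(x)\,\psi^\xi_n(x')\,d\xi .
\]
Since the hypotheses \eqref{eq:2dassumptions} are scale-invariant, $V$ may be normalised and all estimates below made uniform. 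The point of working in this fibred picture, rather than with the Carnot--Carath\'eodory geometry of $\opL$ --- whose homogeneous dimension, which is what heat-kernel methods detect, is larger than $2$ --- is that the two variables can be treated separately: decay in $|x-x'|$ reflects the spatial decay of the eigenfunctions $\psi^\xi_n$, while decay in $|y-y'|$ is produced by integrating by parts in $\xi$ in the formula above (which calls for control of the $\xi$-derivatives of $\lambda_n(\xi)$ and of $\psi^\xi_n$), and each variable contributes only a half-dimension, which is why the threshold $2/2$ does not depend on $V$. Concretely, the building block is a weighted $L^2$ estimate of the schematic shape
\[
\int_{\RR}\bigl(1+|x-x'|/\ell_\xi\bigr)^{2\sigma}\,\Bigl|\sum_{n\geq 0}\mm\bigl(\lambda_n(\xi)\bigr)\,\psi^\xi_n(x)\,\psi^\xi_n(x')\Bigr|^2\,dx \;\lesssim_\sigma\; \ell_\xi^{-1}\,\|\mm\|_{\sobolev{\sigma}{2}}^2 \qquad(\sigma > 1/2),
\]
uniform in $\xi$ and $x'$, for $\mm$ supported in a fixed dyadic energy band and $\ell_\xi$ the natural length scale, together with the analogous estimate in $y$ (which costs further derivatives of $\mm$), the two combining to give the required $L^2_s$ bound for $s > 1$.

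The heart of the proof is the verification of this weighted $L^2$ estimate. Expanding the square and converting the sum over $n$ into an integral over the energy $\lambda$ against the eigenvalue counting measure $dN_\xi$ reduces it to two pointwise inputs, both required uniformly in $\xi$ and --- by \eqref{eq:2dassumptions_doubling} --- uniformly across energy scales: (a) a Weyl-type bound for $N_\xi$, classical for single-well potentials; and (b) a precise pointwise estimate for $|\psi^\xi_n(x)|$, in particular for its maximum over the \emph{transition region} about the turning point. By scaling, (b) is reduced to $H = -\partial_x^2 + V$. For energy $\lambda$, with turning point $x_\lambda$ (so $V(x_\lambda)\simeq\lambda$), the classically allowed bulk $|x|\lesssim x_\lambda/2$ is controlled by the Liouville--Green/WKB construction, which bounds $|\psi|^2$ through $(\lambda-V(x))^{-1/2}$ and the action period, and the classically forbidden region $|x|>x_\lambda$ by an Agmon-type exponential decay estimate. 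The delicate zone is the transition region, of width comparable to $|V'(x_\lambda)|^{-1/3}$ about $x_\lambda$, where $\psi$ behaves like an Airy function and its amplitude is enhanced relative to the bulk, so that naively interpolating between the WKB and Agmon regimes overestimates $|\psi|$. The idea is to linearise $V$ about $x_\lambda$, compare $\psi$ with a genuine solution of the Airy equation, control the linearisation error using the H\"older regularity \eqref{eq:2dassumptions_holder} of $V'$, and exploit the doubling \eqref{eq:2dassumptions_doubling} to make the whole comparison uniform in $\lambda$; the conclusion is that even the peak of $|\psi^\xi_n|^2$ over the transition region contributes only the sharp amount to the Plancherel sum, which is exactly what keeps the smoothness threshold at $s > 2/2$.

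With the weighted $L^2$ estimate and its $y$-counterpart in hand, part~\ref{en:main_l1} follows by Cauchy--Schwarz against the weight: the reciprocal weight is Lebesgue-integrable over $\RR^2$ precisely because $2\sigma > 1$ in each variable, and the length scales cancel, so that $\mm(t\opL)$ has an $L^1$-integrable kernel with norm $\lesssim_s\|\mm\|_{\sobolev{s}{2}}$ uniformly in $t > 0$; part~\ref{en:main_mh} then follows from part~\ref{en:main_l1} together with the Davies--Gaffney/Gaussian bounds available for $\opL$, via the standard Blunck--Kunstmann weak-type $(1,1)$ argument and $L^p$ interpolation, exactly as in \cite{DM2}. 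The main obstacle is step (b): the WKB and Agmon estimates are routine, whereas the sharp pointwise control of the eigenfunctions across the turning point --- under only the $C^{1+\theta}$ hypothesis on $V$, and uniformly over all frequencies $\xi$ --- requires the delicate Airy comparison and is the new technical core of the argument.
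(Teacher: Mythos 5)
Your overall architecture is the right one and matches the paper at a high level: fibre over the $y$-frequency, reduce to a weighted Plancherel estimate with an $L^2$ Sobolev norm, get $L^1$ boundedness by Cauchy--Schwarz against the weight, and obtain part \ref{en:main_mh} by singular-integral/interpolation machinery as in \cite{DM2} (the paper packages this reduction as \cite[Theorem 4.1]{DM2}, with the companion assumption (B) already proved there). The genuine gap is at what you yourself identify as the technical core, step (b): you propose to control the eigenfunctions across the turning point by linearising $V$ at $x_\lambda$ and comparing with an Airy solution, with the linearisation error controlled by the H\"older continuity \eqref{eq:2dassumptions_holder} of $V'$. This is exactly the route the paper argues is not available in this generality: Olver/WKB-type Airy approximations come with error terms involving $V''$ (and $V'''$), which is why the earlier Airy-based argument of \cite{dallara_martini} needed $C^3$ potentials comparable to $|x|^d$ with $d\in(1,2]$, the very restriction this paper is designed to remove. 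Under the mere $C^{1,\theta}$-type hypothesis defining $\pot_{1+\theta}(\kappa)$ you give no mechanism for closing the perturbative comparison uniformly in the energy, and indeed the paper does not prove (and does not need) the sharp Airy-peak bound you claim: for this class it only establishes the transition-region estimate with exponents $\alpha\in(1/4,1/2)$, i.e.\ a lossy version of the Airy behaviour, obtained by a quite different, non-approximative monotonicity argument (the ``Sonin's function'' method of Section \ref{s:pointwiseeigenfunction}); the sharp exponent $\alpha=1/4$ is recovered only under convexity or $C^3$ control. So the step you label as ``the delicate Airy comparison'' is asserted rather than proved, and as stated it is likely not even true in the claimed strength.

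A second, smaller gap: after inserting the pointwise eigenfunction bound into the Plancherel sum, a ``Weyl-type bound for $N_\xi$'' is not enough. The transition-region factor $|V(x)-E_n|^{-\theta}$ must be summed over eigenvalues in a dyadic energy band, and to show this sum is comparable to the number of terms one needs quantitative non-clustering of the $E_n$ near the resonant value $V(x)$ --- in the paper this is supplied by the Bohr--Sommerfeld approximation with logarithmic error (Proposition \ref{prp:BS_log}) combined with the summation Lemma \ref{lem:sum_int} and the uniform $n^{\delta}$ bound, and an extra subtlety arises because the relevant sum (Theorem \ref{thm:new_estimate}) mixes eigenfunctions of differently scaled potentials $\Xi_n(\lambda;V)V$. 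Your proposal passes over both points; the counting measure argument you sketch would not by itself control the resonant window.
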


To appreciate the nature of the improvement, one may notice that Theorem \ref{thm:main} gives the sharp $L^1$ boundedness range for Bochner--Riesz means associated with the Grushin operator $\opL$, a result that cannot be deduced from the multiplier theorem of \cite{DM2}.

\begin{cor}\label{cor:bochnerriesz}
Under the same assumptions as in Theorem \ref{thm:main},
the Bochner--Riesz means $(1-r \opL)_+^\lambda$ of order $\lambda$ associated with $\opL$ are bounded on $L^1(\RR^2)$ uniformly in $r \geq 0$ whenever $\lambda > 1/2$.
\end{cor}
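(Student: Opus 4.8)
The plan is to recognize $(1-r\opL)_+^\lambda$ as a spectral multiplier of $\opL$ whose symbol is compactly supported, and then to deduce the claim directly from part~\ref{en:main_l1} of Theorem~\ref{thm:main} once the relevant $L^2$ Sobolev norm has been estimated. Set $\mm_\lambda(\xi) \defeq (1-\xi)_+^\lambda$, so that $(1-r\opL)_+^\lambda = \mm_\lambda(r\opL)$ by the spectral theorem. The value $r=0$ is trivial, since then the operator is the identity; so assume $r>0$. Because $\opL$ is nonnegative, $\mm_\lambda(r\opL)$ is unaffected if we replace $\mm_\lambda$ by $\tilde{\mm}_\lambda \defeq \psi \, \mm_\lambda$, where $\psi \in C^\infty_c(\RR)$ is any fixed cutoff with $\supp \psi \subseteq [-1,1]$ and $\psi \equiv 1$ on $[0,1]$: indeed $\tilde{\mm}_\lambda$ is supported in $[-1,1]$ and agrees with $\mm_\lambda$ on $[0,\infty) \supseteq \sigma(\opL)$.

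Next I would estimate $\|\tilde{\mm}_\lambda\|_{\sobolev{s}{2}}$. Away from $\xi = 1$ the function $\tilde{\mm}_\lambda$ is smooth with compact support, and near $\xi = 1$ it behaves like $(1-\xi)_+^\lambda$; since a smooth truncation of $\xi \mapsto \xi_+^\lambda$ has Fourier transform decaying like $|\cdot|^{-\lambda-1}$, one obtains $\tilde{\mm}_\lambda \in \sobolev{s}{2}(\RR)$ for every $s < \lambda + \tfrac12$, and this is sharp. Thus, if $\lambda > 1/2$ we may choose $s$ with $2/2 < s < \lambda + \tfrac12$, and Theorem~\ref{thm:main}\ref{en:main_l1} yields
\[
\sup_{r>0} \bigl\| (1-r\opL)_+^\lambda \bigr\|_{L^1\to L^1} = \sup_{r>0} \bigl\| \tilde{\mm}_\lambda(r\opL) \bigr\|_{L^1 \to L^1} \lesssim_s \|\tilde{\mm}_\lambda\|_{\sobolev{s}{2}} < \infty .
\]
Together with the case $r=0$, this proves the corollary.

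I do not expect any genuine difficulty in this deduction: it is a formal consequence of Theorem~\ref{thm:main}, and the only step requiring (minimal) care is the elementary identification of the critical Sobolev exponent of $(1-\xi)_+^\lambda$. The point worth stressing is that the sharp threshold $\lambda > 1/2$ hinges on having an $\sobolev{s}{2}$ rather than an $\sobolev{s}{\infty}$ condition on the multiplier: since $(1-\xi)_+^\lambda \in \sobolev{s}{\infty}$ only for $s < \lambda$, the Mihlin--H\"ormander theorem of \cite{DM2} (with $s$ slightly above $2/2$) would only reach the strictly smaller range $\lambda > 1$, which is why Corollary~\ref{cor:bochnerriesz} genuinely goes beyond what was available there.
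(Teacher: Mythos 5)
Your deduction is correct and is precisely the (standard) argument the paper has in mind: the corollary is stated as an immediate consequence of Theorem \ref{thm:main}\ref{en:main_l1}, resting exactly on the facts that a smooth truncation of $(1-\xi)_+^\lambda$ lies in $\sobolev{s}{2}(\RR)$ iff $s<\lambda+\tfrac12$ and that one may choose $s\in(1,\lambda+\tfrac12)$ when $\lambda>1/2$. Your closing remark contrasting the $\sobolev{s}{2}$ and $\sobolev{s}{\infty}$ thresholds also matches the paper's stated motivation for the refinement.
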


The sharpness of Theorem \ref{thm:main} and Corollary \ref{cor:bochnerriesz} follows by a standard ``transplantation'' technique (cf.\ \cite{mitjagin_divergenz_1974,kenig_divergence_1982}; see also \cite[Theorem 5.2]{martini_crsphere}).
Indeed $\opL$ is elliptic (its principal symbol is a positive definite quadratic form) where $x \neq 0$, and therefore the ranges of indices $s$ and $\lambda$ for which the boundedness results in Theorem \ref{thm:main} and Corollary \ref{cor:bochnerriesz} hold cannot be larger than the analogous ranges when $\opL$ is replaced by the Euclidean Laplacian $-\partial_x^2 - \partial_y^2$ on $\RR^2$.

Theorem \ref{thm:main} is already known under more restrictive assumptions on $V$. Namely, the case $V(x) = x^2$ is in \cite{martini_grushin_2012,martini_sharp_2014} and the case $V(x) = |x|$ is in \cite{chen_sharp_2013}. Moreover, in a previous joint paper \cite{dallara_martini}, we established the same result when $V$ is convex, $C^3$ off the origin, and, for some $d\in (1,2]$, the estimates
\[
|x^2 V''(x)| + |x^3 V'''(x)|\lesssim x V'(x) \simeq V(x)= V(-x) \simeq |x|^d 
\]
hold for all $x\in \RR\setminus\{0\}$. This appears to have been the first optimal multiplier theorem for a nonelliptic (sub-elliptic) operator enjoying some form of stability under perturbations of the coefficients of the operator. However, the restriction on the power $d$ cannot be removed using the methods of \cite{dallara_martini}, and the desire to overcome this limitation has been the main motivation for the development of a new proof strategy in \cite{DM2} and in the present paper. Notice that the aforementioned works \cite{robinson_analysis_2008,martini_grushin_2012,martini_sharp_2014,chen_sharp_2013,dallara_martini} treat also higher-dimensional cases, and, as a matter of fact, some higher-dimensional cases could be treated by adapting the methods used here too. However, in the same spirit as in \cite{DM2}, here we consider only two-dimensional Grushin operators.

\subsection{Strategy of the proof}

In order to present the main ideas of the paper, it is convenient to recall the notation for the classes of single-well potentials defined in \cite[Definitions 7.5 and 8.3]{DM2}, which express the assumptions \eqref{eq:2dassumptions} in a quantitative form.

\begin{dfn}
Let $\kappa\geq 1$ and $\theta \in (0,1)$. We denote by $\pot_1(\kappa)$ the class of non-identically zero continuous functions $V : \RR \to [0,\infty)$ which are $C^1$ off the origin and such that
\[
\kappa^{-1} V(x) \leq x V'(x) \leq \kappa V(x), \qquad V(-x) \leq \kappa V(x)
\]
for all $x \neq 0$. We denote by $\pot_{1+\theta}(\kappa)$ the class of the $V \in \pot_1(\kappa)$ that satisfy the additional inequality
\[
|V'(e^h x) - V'(x)| \leq \kappa |h|^\theta
\]
for all $x \neq 0$ and $h \in [-1,1]$.
\end{dfn}

As in other works on the subject, Theorem \ref{thm:main} will be deduced from an appropriate ``weighted Plancherel estimate''. In the present case, in light of \cite[Theorem 4.1]{DM2}, it will be enough to prove that for all $V \in \pot_{1+\theta}(\kappa)$, $\vartheta \in [0,1/2)$, $r>0$, and all continuous functions $\mm : \RR \to \CC$ with $\supp \mm \subseteq [1/4,1]$,
\begin{multline}\label{eq:weighted_plancherel}
\esssup_{z' \in \RR^2} \, r^{2-2\vartheta} \max\{V(r),V(x')\}^{1/2-\vartheta} \int_{\RR^2} |y-y'|^{2\vartheta} \left|\Kern_{\mm(r^2 \opL)}(z,z') \right|^2 \,dz \\
\lesssim_{\theta,\kappa,\vartheta} \|\mm\|_{\sobolev{\vartheta}{2}}^2. 
\end{multline}
Here $z=(x,y)$ and $z'=(x',y')$, while $\Kern_{\mm(r^2 \opL)}$ denotes the integral kernel of the operator $\mm(r^2 \opL)$. Indeed, the estimate \eqref{eq:weighted_plancherel} proves assumption (A) of \cite[Theorem 4.1]{DM2} for $q=2$, while assumption (B) is already proved in \cite[Theorem 9.1]{DM2}. We point out that, in the special case $V(x)=x^2$, the above estimate is proved in \cite{martini_sharp_2014}, while the techniques of \cite{martini_grushin_2012, chen_sharp_2013, dallara_martini} lead to a different Plancherel estimate, with a weight depending only on $x,x'$ in place of $|y-y'|^{2\vartheta}$ and $L^2$ in place of $\sobolev{\vartheta}{2}$ in the right-hand side.

Our proof of the weighted Plancherel estimate \eqref{eq:weighted_plancherel} largely follows the lines of the analogous estimate proved in \cite[Theorem 9.1]{DM2}, with the addition of a key new ingredient: universal
 pointwise estimates for eigenfunctions of one-dimensional Schr\"odinger operators with potentials in the class $\pot_{1+\theta}(\kappa)$.
As in \cite[Section 7]{DM2}, we consider the Schr\"odinger operator $\opH[V] \defeq -\partial_x^2 + V$ on $\RR$ with potential $V\in\pot_{1+\theta}(\kappa)$, and we denote by $E_n(V)$ and $\psi_n(\cdot;V)$ ($n\geq 1$) the corresponding eigenvalues and normalised eigenfunctions. The eigenfunction estimates that we need here have the form
\begin{equation}\label{universal_eigenfunction}
|\psi_n(x;V)| \lesssim |\{ V \leq E_n(V) \}|^{-1/2}  \min\{ n^{\delta/2} , E_n(V)^{\beta/2} |V(x)-E_n(V)|^{-\beta/2} \} 
\end{equation}
for some $\delta,\beta \in (0,1)$, and they have the crucial feature that the implicit constant depends only on $\kappa$ and $\theta$ and not on the specific potential $V$.
The ``universality'' of an estimate such as \eqref{universal_eigenfunction} lies in the fact that the right-hand side is simply expressed in terms of natural quantities such as $V,E_n(V),n$ and universal exponents $\delta,\beta$, and does not depend, e.g., on the degree of polynomial growth of $V$.

In the regions where $V \ll E_n(V)$ and $V \gg E_n(V)$, the estimate \eqref{universal_eigenfunction} is already contained in estimates proved in \cite{DM2}, which actually hold for all $V \in \pot_1(\kappa)$. What is crucial for our present purposes is that \eqref{universal_eigenfunction} also covers the ``transition region'' $\{ V \simeq E_n(V) \}$, where the eigenfunction $\psi_n(\cdot;V)$ exhibits a change in behaviour from oscillatory to decaying.
Various techniques are available to deal with the more general problem of approximating eigenfunctions in the transition region (e.g., Olver's method \cite[Chapter 11]{olverbook} and the WKB method, both yielding approximations in terms of the Airy function), but it does not seem possible to use any of them as a black box to prove \eqref{universal_eigenfunction} in the required generality. The method used here is in fact substantially different and of a more direct nature, establishing the upper bound \eqref{universal_eigenfunction} via a monotonicity argument inspired by what is dubbed the ``Sonin's function'' method in \cite{krasikov}, which in turn refers it back to the work of Szeg\H{o} on orthogonal polynomials \cite[\S 7.31 and \S 7.6]{szego}.

The importance of the pointwise estimate \eqref{universal_eigenfunction} is that from it one can deduce a variant of the ``spectral projector bound'' proved in \cite[Theorem 8.5]{DM2}, which plays a fundamental role in the proof of the weighted Plancherel estimate \eqref{eq:weighted_plancherel}.
Specifically, the desired spectral projector bound (Theorem \ref{thm:new_estimate} below) is obtained by summing instances of the eigenfunction estimate \eqref{universal_eigenfunction} corresponding to different values of $n$ and suitably scaled versions $\tau V$ of the potential $V$, where the scaling parameter $\tau$ depends on $n$. In order to bound the resulting sum, another important ingredient is an approximated Bohr--Sommerfeld identity with logarithmic error term (Proposition \ref{prp:BS_log} below) valid for Schrödinger operators with potentials in the class $\pot_1(\kappa)$, which provides precise information on the ``gaps'' between the quantities $E_n(\tau V)$ involved in the estimate.

\subsection{Structure of the paper}
%
In Section \ref{s:projector_bound} we prove the spectral projector bound in a conditional form, namely, by assuming that suitable pointwise eigenfunction estimates of the form \eqref{universal_eigenfunction} hold.

Section \ref{s:pointwiseeigenfunction} is devoted to the proof of the required pointwise eigenfunction estimates. As discussed in that section, suitable pointwise estimates can be proved for a larger class than $\pot_{1+\theta}(\kappa)$. Indeed, several variants of the above eigenfunction estimates \eqref{universal_eigenfunction} are discussed, which may be of independent interest, with different values of $\delta$ and $\beta$ corresponding to different assumptions on the potential $V$.

Finally, in Section \ref{s:weightedplancherel}, we prove the weighted Plancherel estimate \eqref{eq:weighted_plancherel} with $L^2$ Sobolev norm, which, in light of \cite[Theorem 4.1]{DM2}, implies our main result.

\subsection{Notation}
$\chr_A$ denotes the characteristic function of the set $A$. We set $\Rpos = (0,\infty)$ and $\Rnon = [0,\infty)$. $\NN$ denotes the set of natural numbers (including zero), while $\Npos = \NN \setminus \{0\}$ is the set of the positive integers.
For an invertible function $V$, we write $V^\inv$ to denote its compositional inverse.
$\Card I$ denotes the number of elements of a finite set $I$.
For a measurable subset $A \subseteq \RR$ we denote by $|A|$ its Lebesgue measure.
We write $\Kern_T$ to denote the integral kernel of the operator $T$.

\section{A variant of the spectral projector bound}\label{s:projector_bound}

\subsection{Summary of the results}

As before, let $E_n(V)$ and $\psi_n(\cdot;V)$ ($n\geq 1$) be the eigenvalues and normalised eigenfunctions of the Schr\"odinger operator $\opH[V] = -\partial_x^2 + V$ on $\RR$. 
We begin by recording an immediate consequence of the ``virial theorem'' in \cite[Theorem 7.3]{DM2}. Under more restrictive assumptions on $V$, analogous estimates can be found in \cite[eq.\ (5.5)]{dallara_martini}.

\begin{prp}\label{prp:inv_eigenvalue}
Let $V \in \pot_1(\kappa)$ and $n \in \Npos$. Then the function
\[
\Rpos \ni \tau \mapsto E_n(\tau V) \in \Rpos
\]
is a strictly increasing, real analytic bijection, and
\[
E_n(\tau V) \lesssim_\kappa \tau \partial_\tau E_n(\tau V) \leq E_n(\tau V)
\]
for all $\tau \in \Rpos$. Moreover, if $\Xi_n(\cdot;V) : \Rpos \to \Rpos$ denotes its inverse, then
\[
\Xi_n(\lambda;V) \simeq_{\kappa} \lambda \partial_\lambda \Xi_n(\lambda;V).
\]
for all $\lambda \in \Rpos$.
\end{prp}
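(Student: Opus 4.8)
The plan is to reduce everything to the differential inequality for $\tau \mapsto E_n(\tau V)$, which is the engine of the statement, and then to translate this into the corresponding inequality for the inverse $\Xi_n$ by the chain rule. First I would invoke \cite[Theorem 7.3]{DM2} (the ``virial theorem''), which should give a Feynman--Hellmann type formula: since $\opH[\tau V] = -\partial_x^2 + \tau V$, differentiating the eigenvalue equation in the parameter $\tau$ and pairing with the normalised eigenfunction $\psi_n(\cdot;\tau V)$ yields
\[
\partial_\tau E_n(\tau V) = \int_\RR V(x)\, |\psi_n(x;\tau V)|^2 \,dx,
\]
which is manifestly positive (as $V \geq 0$ and $V \not\equiv 0$), so $\tau \mapsto E_n(\tau V)$ is strictly increasing. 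Multiplying by $\tau$ gives $\tau \partial_\tau E_n(\tau V) = \int_\RR \tau V\, |\psi_n|^2 \leq E_n(\tau V)$, because $\int_\RR (\tau V) |\psi_n|^2 \leq \int_\RR (|\partial_x \psi_n|^2 + \tau V |\psi_n|^2) = E_n(\tau V)$; this is the upper bound, valid with no doubling hypothesis. The lower bound $E_n(\tau V) \lesssim_\kappa \tau \partial_\tau E_n(\tau V)$ is exactly where $V \in \pot_1(\kappa)$ enters: the virial theorem in \cite[Theorem 7.3]{DM2} should provide a matching lower bound for $\int_\RR \tau V |\psi_n|^2$ in terms of $E_n(\tau V)$ with a constant depending on $\kappa$, using the pointwise relation $\kappa^{-1}(\tau V)(x) \leq x (\tau V)'(x) \leq \kappa (\tau V)(x)$ (which is scale-invariant in $\tau$) — the classical virial identity relates $\int x V' |\psi|^2$ to the kinetic energy, and combining this with the two-sided bound $xV' \simeq_\kappa V$ closes the loop.

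Next, for real analyticity and the bijection property: real analyticity of $\tau \mapsto E_n(\tau V)$ follows from analytic perturbation theory (Kato--Rellich) once one knows $E_n(\tau V)$ is a simple, isolated eigenvalue, which holds because one-dimensional Schrödinger operators with confining potentials have simple discrete spectrum, and $\tau V$ is an analytic (indeed linear) family in $\tau$ with $V$ a relatively bounded perturbation. Strict monotonicity plus continuity gives injectivity; for surjectivity onto $\Rpos$ I would check the limiting behaviour: as $\tau \to 0^+$, the potential well $\tau V$ becomes shallow and wide but remains confining (since $\tau V(x) \to \infty$ as $|x| \to \infty$ for each fixed $\tau > 0$ by the doubling lower bound $V(x) \gtrsim_\kappa |x|^{1/\kappa}$), and a scaling/min-max argument shows $E_n(\tau V) \to 0$; as $\tau \to \infty$, $E_n(\tau V) \to \infty$. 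Hence $E_n(\tau \cdot V)$ sweeps out all of $\Rpos$, so $\Xi_n(\cdot;V)$ is a well-defined real analytic bijection of $\Rpos$.

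Finally, the estimate for $\Xi_n$ is a formal consequence: if $\lambda = E_n(\tau V)$, i.e. $\tau = \Xi_n(\lambda;V)$, then $\partial_\lambda \Xi_n(\lambda;V) = 1/\partial_\tau E_n(\tau V)$, so
\[
\lambda\, \partial_\lambda \Xi_n(\lambda;V) = \frac{E_n(\tau V)}{\partial_\tau E_n(\tau V)} = \frac{1}{\tau}\cdot \frac{E_n(\tau V)}{\tau^{-1}\partial_\tau E_n(\tau V)} \cdot \tau,
\]
and the two-sided bound $E_n(\tau V) \simeq_\kappa \tau \partial_\tau E_n(\tau V)$ gives $\tau \simeq_\kappa \lambda \partial_\lambda \Xi_n(\lambda;V)$, i.e. $\Xi_n(\lambda;V) \simeq_\kappa \lambda \partial_\lambda \Xi_n(\lambda;V)$, as claimed. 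The main obstacle I anticipate is extracting the $\kappa$-dependent \emph{lower} bound $E_n(\tau V) \lesssim_\kappa \tau \partial_\tau E_n(\tau V)$ cleanly from the virial theorem of \cite{DM2} — the upper bound and all the qualitative statements are soft, but the lower bound genuinely uses the structure of the single-well class and is the one place where the argument is not automatic; everything else is bookkeeping with the chain rule and standard perturbation theory.
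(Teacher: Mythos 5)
Your proposal is correct and follows essentially the route the paper intends: the paper gives no written proof, presenting the proposition as an immediate consequence of the virial theorem of \cite[Theorem 7.3]{DM2}, which is exactly the combination you describe (Feynman--Hellmann for $\partial_\tau E_n(\tau V)=\int V\,\psi_n^2$, the virial identity plus $xV'\simeq_\kappa V$ to get $\int \tau V\,\psi_n^2\simeq_\kappa \int|\psi_n'|^2\simeq_\kappa E_n(\tau V)$, analytic perturbation theory for simple eigenvalues, and the chain rule for $\Xi_n$). The one step you flag as a potential obstacle — the $\kappa$-dependent lower bound — is in fact the immediate part once the virial identity is in hand; and note that surjectivity onto $\Rpos$ also follows directly by integrating $\partial_\tau\log E_n(\tau V)\simeq_\kappa \tau^{-1}$, with no separate min--max argument needed.
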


The aim of this section is the proof of the following bound, which should be compared to the ``spectral projector bound'' of \cite[Theorem 8.5]{DM2}. 

\begin{thm}\label{thm:new_estimate}
Let $\kappa,a > 1$ and $\theta,\delta \in (0,1)$. Let $\tilde\pot$ be a subcone of $\pot_1(\kappa)$ such that the eigenfunction estimate
\begin{equation*}
|\psi_n(x;V)| \leq a \, |\{ V \leq E_n(V) \}|^{-1/2}  \min\{ n^{\delta/2} , E_n(V)^{\theta/2} |V(x)-E_n(V)|^{-\theta/2} \} 
\end{equation*}
holds for all $V \in \tilde\pot$, $n \in \Npos$, and $x \in \RR$. 
Then, for all $V \in \tilde\pot$ and $\lambda,A \in \Rpos$,
\[
\sum_{\substack{n \in \Npos \\ \lambda/\Xi_n(\lambda;V) \in [A,2A] }} \psi_n(x; \Xi_n(\lambda;V) V)^2 \lesssim_{\kappa,a,\theta,\delta} \lambda^{1/2} (\chr_{V \leq 8 A} + e^{-c \lambda^{1/2} |x|} \chr_{V > 8A}),
\]
where $c=c(\kappa)$.
\end{thm}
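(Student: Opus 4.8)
The plan is to fix $V \in \tilde\pot$ and $\lambda, A \in \Rpos$, and to reduce the sum to a manageable form using the substitution $\tau_n \defeq \Xi_n(\lambda;V)$, so that $\lambda/\tau_n \in [A,2A]$ is exactly the summation constraint and each summand is $\psi_n(x;\tau_n V)^2$ with $E_n(\tau_n V) = \lambda$ by definition of $\Xi_n$. First I would apply the hypothesised eigenfunction estimate to each term with the scaled potential $\tau_n V$ (which still lies in $\pot_1(\kappa)$, hence in the cone $\tilde\pot$, and has eigenvalue $\lambda$): this gives
\[
\psi_n(x;\tau_n V)^2 \leq a^2 \, |\{ \tau_n V \leq \lambda \}|^{-1}  \min\{ n^{\delta} , \lambda^{\theta} |\tau_n V(x)-\lambda|^{-\theta} \}.
\]
Since $\tau_n V \leq \lambda$ iff $V \leq \lambda/\tau_n$, and $\lambda/\tau_n \simeq A$ on the relevant range, the doubling property \eqref{eq:2dassumptions_doubling} (quantitatively, $V \in \pot_1(\kappa)$) gives $|\{\tau_n V \leq \lambda\}| \simeq_\kappa |\{V \leq A\}| \simeq_\kappa A^{1/2}/(\text{something})$; more precisely one uses that $t \mapsto |\{V \leq t\}|$ is doubling, so all these measures are comparable to a single quantity $L \defeq |\{V \leq A\}|$, uniformly in $n$ in the summation range.

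The second step is to count the indices $n$ in the sum and control the gaps between consecutive values. This is where Proposition \ref{prp:BS_log} (the approximated Bohr--Sommerfeld identity with logarithmic error) enters: it pins down $n$ as a function of $E_n(\tau V)$ up to a logarithmic error, and combined with Proposition \ref{prp:inv_eigenvalue} (which says $\tau \partial_\tau E_n(\tau V) \simeq_\kappa E_n(\tau V)$, equivalently $\lambda \partial_\lambda \Xi_n \simeq_\kappa \Xi_n$) it lets me estimate how $\tau_n = \Xi_n(\lambda;V)$ varies with $n$, hence how many $n$ satisfy $\lambda/\tau_n \in [A,2A]$ and how the quantities $n^\delta$ and $|\tau_n V(x) - \lambda|$ are distributed over these $n$. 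The expected outcome is that the number of admissible $n$ is $\simeq_\kappa \lambda^{1/2} L$ (the Weyl-type count for the window $[A,2A]$ of potential heights), with the largest such $n$ of the same order, so that $\sum n^\delta \lesssim (\lambda^{1/2} L)^{1+\delta}$ — but this crude bound is too lossy, and one must instead use the $\min$ in the eigenfunction estimate: for $x$ with $V(x) \lesssim A$ the minimum is governed by a balance between $n^\delta$ and $\lambda^\theta|\tau_n V(x)-\lambda|^{-\theta}$, and summing the minimum over $n$ (using that $|\tau_n V(x)-\lambda|$ grows linearly as $\tau_n$ moves away from $\lambda/V(x)$, by Proposition \ref{prp:inv_eigenvalue}) produces a convergent sum of size $\simeq L^{-1} \cdot \lambda^{1/2} L = \lambda^{1/2}$ after multiplying by the factor $L^{-1}$, which is exactly the claimed bound on $\{V \leq 8A\}$.

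For the region $\{V > 8A\}$, i.e. $V(x) > 8A \geq 4\lambda/\tau_n$, one has $\tau_n V(x) - \lambda \geq \tau_n V(x)/2 \gtrsim \lambda V(x)/A$, so the eigenfunction estimate already gives rapid decay of each term in $V(x)$; but to extract the claimed exponential factor $e^{-c\lambda^{1/2}|x|}$ I would instead invoke the pointwise decay estimates for eigenfunctions in the classically forbidden region from \cite{DM2} (which hold for all $V \in \pot_1(\kappa)$ and already provide Agmon-type exponential decay in terms of $\int (V-E_n)^{1/2}$), noting that on $\{V > 8A\}$ one has $\tau_n V - \lambda \simeq \tau_n V$ and $\int_0^{|x|}(\tau_n V)^{1/2} \gtrsim \tau_n^{1/2} A^{1/2} |x| \simeq \lambda^{1/2}|x|$ by the doubling lower bound $V(x) \gtrsim_\kappa A (|x|/x_A)^{1/\kappa}$ where $x_A \simeq L$; summing the resulting geometric-type series over the $\simeq \lambda^{1/2}L$ indices and dividing by $L$ again yields $\lambda^{1/2} e^{-c\lambda^{1/2}|x|}$. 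I expect the main obstacle to be the bookkeeping in the second step: carefully converting the Bohr--Sommerfeld count and the logarithmic derivative bound for $\Xi_n$ into a clean estimate for $\sum_n \min\{n^\delta, \lambda^\theta|\tau_n V(x)-\lambda|^{-\theta}\}$ that is uniform in $x$ and in $A$, and in particular handling the transition value $n_\ast$ where the two terms in the minimum cross without losing the sharp power $\lambda^{1/2}$.
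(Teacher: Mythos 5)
Your plan follows essentially the same route as the paper's proof: the region $\{V>8A\}$ is handled by the Agmon-type exponential decay from \cite{DM2} together with the Weyl-type count of admissible $n$, while on $\{V\leq 8A\}$ one applies the hypothesised estimate to the scaled potential $\Xi_n(\lambda;V)V$ (legitimate precisely because $\tilde\pot$ is a cone) and sums the resulting minimum using the logarithmic Bohr--Sommerfeld identity of Proposition \ref{prp:BS_log}. The ``bookkeeping'' you defer is exactly the content of Lemma \ref{lem:sum_int} combined with the change of variables $t\mapsto \lambda^{1/2}K_V(t)$ of Lemma \ref{lem:BS_function}: the latter converts $|V(x)-\lambda/\Xi_n(\lambda)|$ into $|b-t_n|$ with $t_n$ an arithmetic progression up to an error $O(\log n)=O(n^{1-\delta})$, so the $\lesssim a^{1-\delta}$ near-resonant indices each contribute the uniform bound $a^{\delta-1}$ and total $O(1)$, which is the balance you correctly identified as the crux.
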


The main difference between the previous result and \cite[Theorem 8.5]{DM2} is that the above sum involves eigenfunctions corresponding to different potentials (that is, potentials $\tau V$ where $\tau$ depends on the summation index $n$), so cannot be immediately related to properties of the spectral decomposition of a single Schr\"odinger operator. A similar bound can be found in \cite[Proposition 5.8]{dallara_martini}, under more restrictive assumptions on $V$.

The rest of the section is devoted to the proof of Theorem \ref{thm:new_estimate}.

\subsection{A summation lemma}
The following elementary summation lemma will be a key tool in the proof of the spectral projector bound.

\begin{lem}\label{lem:sum_int}
Let $c \in \Rpos$, $\kappa \in [1,\infty)$, $\theta,\beta \in [0,1)$. Let $I \subseteq \Npos$ and, for all $n \in I$, let $t_n \in [\kappa^{-1},\infty)$ be such that
\begin{equation}\label{eq:almost_gaps}
|t_n-cn| \leq \kappa n^{\beta}.
\end{equation}
Then
\begin{equation}\label{eq:sum_est}
\sup_{\substack{a > 0 \\ 0 < b \leq \kappa a}} \sum_{\substack{n \in I \\  t_n \leq \kappa a}} \min\{ a^{\theta-1} | t_n - b |^{-\theta}, a^{-\beta} \} \lesssim_{\kappa,c,\theta,\beta} 1.
\end{equation}
\end{lem}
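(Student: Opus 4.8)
The plan is to reduce the quantity in \eqref{eq:sum_est} to an integral comparison, exploiting \eqref{eq:almost_gaps} to transfer the sum over $n \in I$ to a sum over the quasi-arithmetic sequence $(t_n)$. Fix $a > 0$ and $0 < b \leq \kappa a$. First I would dispose of a trivial range: the constraint $t_n \leq \kappa a$ together with \eqref{eq:almost_gaps} (and $t_n \geq \kappa^{-1}$, so $n \geq 1$) forces $cn \leq \kappa a + \kappa n^\beta$, hence, since $\beta < 1$, $n \lesssim_{\kappa,c} a$ (for $a$ bounded below; for small $a$ there are $\BigO_\kappa(1)$ terms, dealt with separately using the $a^{-\beta}$ bound). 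In particular the index set $\{n \in I : t_n \leq \kappa a\}$ has at most $\BigO_{\kappa,c}(a)$ elements and the $t_n$ lie in an interval of length $\BigO_{\kappa}(a)$.

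The core of the argument is the following. For each dyadic scale, split the sum according to the distance $|t_n - b|$. Terms with $|t_n - b| \leq a^{(\theta - \beta)/\theta}$ get estimated by the flat bound $a^{-\beta}$; since $|t_{n+1} - t_n| \geq c - \BigO_\kappa(n^{\beta - 1}) \gtrsim_{\kappa,c} 1$ once $n$ is large (and there are $\BigO_{\kappa,c}(1)$ small-$n$ exceptions), the number of such $n$ is $\lesssim_{\kappa,c} a^{(\theta-\beta)/\theta}$, contributing $\lesssim_{\kappa,c} a^{-\beta} \cdot a^{(\theta-\beta)/\theta} = a^{(\theta - \beta)/\theta - \beta}$; one checks the exponent $(\theta-\beta)/\theta - \beta = 1 - \beta/\theta - \beta \leq 0$ when $\beta \geq \theta/(1+\theta)$, and otherwise one uses a coarser split — actually it is cleanest to choose the cutoff radius $\rho$ so that $a^{\theta-1}\rho^{-\theta} = a^{-\beta}$, i.e. $\rho = a^{(1-\theta+\beta)/\theta}$... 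Let me instead organize it by comparing directly with $\int a^{\theta - 1}|t - b|^{-\theta}\,dt$. Since consecutive $t_n$ are separated by a gap $\gtrsim_{\kappa,c} 1$, the sum $\sum_{t_n \leq \kappa a, |t_n - b| > \rho} a^{\theta-1}|t_n - b|^{-\theta}$ is comparable to $a^{\theta - 1}\int_{|t - b| \leq \kappa a} |t-b|^{-\theta}\,dt \lesssim_{\theta} a^{\theta - 1}(\kappa a)^{1-\theta} \lesssim_{\kappa,\theta} 1$, for any choice of $\rho \gtrsim_{\kappa,c} 1$; and the remaining finitely many terms with $|t_n - b| \leq \rho$ number $\lesssim_{\kappa,c} \rho$ and each is $\leq a^{-\beta}$, contributing $\lesssim_{\kappa,c} \rho a^{-\beta}$, which is $\BigO_{\kappa,c}(1)$ as soon as $a \gtrsim 1$ if we take $\rho$ a fixed constant; for $a \lesssim_\kappa 1$ the whole sum has $\BigO_{\kappa,c}(1)$ terms each bounded by a constant. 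Assembling the two contributions and taking the supremum over $a, b$ yields \eqref{eq:sum_est}.

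The main obstacle is handling uniformly the competition between the two regimes — the oscillatory-type singular weight $a^{\theta-1}|t_n-b|^{-\theta}$ near $t_n = b$ and the flat cap $a^{-\beta}$ — while keeping all constants independent of $a$, $b$, $I$, and the particular sequence $(t_n)$. The key quantitative input making this work is that \eqref{eq:almost_gaps} with $\beta < 1$ guarantees a uniform lower bound on consecutive gaps $t_{n+1} - t_n$ for all but boundedly many small indices, so the discrete sum is genuinely comparable to the integral; the role of the lower cap $a^{-\beta}$ is precisely to absorb the near-singular terms where the integral comparison would fail, and the exponent bookkeeping $a^{\theta-1}(\kappa a)^{1-\theta} = \kappa^{1-\theta}$ is what produces a bound independent of $a$. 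One should also double-check the degenerate cases $\theta = 0$ (where the weight is the constant $a^{-1}$ and the bound is just counting $\lesssim a$ terms) and $\beta = 0$ (where \eqref{eq:almost_gaps} gives genuine near-arithmetic spacing), both of which are covered by the same argument.
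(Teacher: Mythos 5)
There is a genuine gap, and it sits exactly at the point the lemma is designed to address. Your argument rests on the claim that consecutive gaps satisfy $|t_{n+1}-t_n| \geq c - \BigO_\kappa(n^{\beta-1}) \gtrsim_{\kappa,c} 1$ for large $n$. This does not follow from \eqref{eq:almost_gaps}: the hypothesis only controls each deviation $|t_n - cn|$ by $\kappa n^{\beta}$ \emph{individually}, and these deviations need not vary slowly in $n$ (nothing gives increments of size $\BigO(n^{\beta-1})$). For $\beta>0$ one can have, say, $t_n = t^*$ for all $n$ with $|cn - t^*| \leq \kappa n^\beta$, so that $\sim a^{\beta}$ of the points $t_n$ coincide at a single value $t^*$ anywhere in $[0,\kappa a]$ (in particular at or near $b$). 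This breaks both halves of your argument: the count of indices with $|t_n-b|\leq \rho$ is not $\lesssim_{\kappa,c}\rho$ but only $\lesssim_{\kappa,c,\beta} \rho + a^{\beta}$, and the sum-to-integral comparison $\sum_{|t_n-b|>\rho} a^{\theta-1}|t_n-b|^{-\theta} \simeq a^{\theta-1}\int_{|t-b|\leq \kappa a}|t-b|^{-\theta}\,dt$ fails, since clusters of $\sim a^{\beta}$ equal $t_n$'s make the sum much larger than the integral over the corresponding interval. In effect your proof only covers the case $\beta=0$, where the $t_n$ really are essentially equispaced; the whole point of the lemma (as the paper remarks) is to handle $\beta>0$, where equispacing fails and the cap $a^{-\beta}$ must absorb clusters of size up to $a^{\beta}$, not just boundedly many near-singular terms.

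The repair is to run the integral comparison along the genuinely equispaced reference points $cn$ rather than along the $t_n$ themselves. Since $n\lesssim_{\kappa,c,\beta} a$ on the summation range, \eqref{eq:almost_gaps} gives $|t_n-cn|\leq E$ with $E\lesssim_{\kappa,c,\beta} a^{\beta}$. Split the indices into those with $cn+E<b-c$, those with $cn-E>b+c$, and the rest. On the first set $t_n\leq cn+E<b-c$, so by monotonicity of $t\mapsto|t-b|^{-\theta}$ one has $|t_n-b|^{-\theta}\leq \frac1c\int_{cn+E}^{cn+E+c}|t-b|^{-\theta}\,dt$, and summing these adjacent unit-scale intervals gives $\lesssim_{\kappa,c,\theta} a^{1-\theta}$, hence $\BigO(1)$ after multiplying by $a^{\theta-1}$; the second set is symmetric. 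The remaining indices satisfy $|cn-b|\leq E+c\lesssim a^{\beta}$, so there are $\lesssim_{\kappa,c,\beta} a^{\beta}$ of them, and the cap $a^{-\beta}$ bounds their total contribution by a constant. This is the paper's argument; your exponent bookkeeping for the far terms ($a^{\theta-1}\cdot a^{1-\theta}=\BigO_\kappa(1)$) is the right idea, but it must be implemented via the arithmetic progression $cn$, not via a nonexistent gap bound on the $t_n$.
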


The proof of Lemma \ref{lem:sum_int} should be compared to that of \cite[Lemma 10]{martini_sharp_2014}. In the case $\beta=0$, the condition \eqref{eq:almost_gaps} implies that the $t_n$ are essentially equispaced, and the estimate \eqref{eq:sum_est} could be obtained, e.g., by using \cite[Lemma 5.7]{dallara_martini} to estimate a sum with the corresponding integral. The point of this lemma is to show that the a similar estimate can be obtained even when $\beta>0$, that is, under a weaker assumption on the gaps between the $t_n$, by taking advantage of the stronger uniform bound $a^{-\beta}$ in the left-hand side of \eqref{eq:sum_est}.

\begin{proof}[Proof of Lemma \ref{lem:sum_int}]
Note that $t_n,n \gtrsim_\kappa 1$ for all $n \in I$. Hence, from the assumption \eqref{eq:almost_gaps} and the fact that $\beta<1$, we deduce that $t_n \simeq_{\kappa,c,\beta} n$ for all $n \in I$.

For a given $a > 0$, from the condition $t_n \leq \kappa a$ and $t_n \simeq_{\kappa,c,\beta} n$, we deduce that $n \lesssim_{\kappa,c,\beta} a$ as well. Therefore, if $I_a = \{ n \in I \tc t_n \leq \kappa a\}$, then \eqref{eq:almost_gaps} implies that
\[
|t_n-cn| \leq E
\]
for all $n \in I_a$, where $E = E(\kappa,c,\beta,a) \lesssim_{\kappa,c,\beta} a^\beta$. 

We now split $I_a$ into the three subsets
\begin{gather*}
I_- = \{ n \in I_a \tc cn+E < b-c \}, \\
 I_+ = \{ n \in I_a \tc cn-E > b+c \}, \\
I_0 = I_a \setminus (I_+ \cup I_-).
\end{gather*}
Then, for all $n \in I_-$,
\[
t_n \leq cn + E < b-c,
\]
and therefore
\[
| t_n - b |^{-\theta} \leq \inf_{t \in [cn+E,cn+E+c] } |t-b|^{-\theta} \leq \frac{1}{c} \int_{cn+E}^{cn+E+c} |t-b|^{-\theta} \,dt
\]
(here we use that $t \mapsto |t-b|^{-\theta}$ is increasing for $t<b$) and
\begin{equation}\label{eq:sum_minus}
\sum_{n \in I_-} | t_n - b |^{-\theta} \leq \frac{1}{c} \int_{0}^b |t-b|^{-\theta} \,dt \lesssim_{c,\theta} b^{1-\theta} \lesssim_{\kappa,\theta} a^{1-\theta},
\end{equation}
since $\theta < 1$.
In a similar way, one proves that
\begin{equation}\label{eq:sum_plus}
\sum_{n \in I_+} | t_n - b |^{-\theta} \leq \frac{1}{c} \int_{b}^{\kappa a} |t-b|^{-\theta} \,dt \lesssim_{\kappa,c,\theta} a^{1-\theta}.
\end{equation}
Finally, if $n \in I_0$, then
\[
|b-cn| \leq E + c \lesssim_{\kappa,c,\beta} a^\beta,
\]
which implies that
\begin{equation}\label{eq:num_zero}
\Card I_0 \lesssim_{\kappa,c,\beta} a^{\beta}.
\end{equation}
The estimate \eqref{eq:sum_est} follows by combining \eqref{eq:sum_minus}, \eqref{eq:sum_plus} and \eqref{eq:num_zero}.
\end{proof}

\subsection{A consequence of Lagrange's Mean Value Theorem}
Let $\kappa \geq 1$. Recall from \cite[Definition 6.1]{DM2} the class $\halfpot_1(\kappa)$ of the $C^1$ functions $W : \Rpos \to \Rpos$ such that
\begin{equation}\label{eq:def_halfpot}
\kappa^{-1} W(x) \leq x W'(x) \leq \kappa W(x)
\end{equation}
for all $x \in \Rpos$. In other words, an element of $\halfpot_1(\kappa)$ is ``half of a potential'' in the class $\pot_1(\kappa)$. Indeed, if $V \in \pot_1(\kappa)$, then $V_\oplus,V_\ominus \in \halfpot_1(\kappa)$, where $V_\oplus,V_\ominus : \Rpos \to \Rpos$ are defined by
\begin{equation}\label{eq:Vplusminus}
V_\oplus(x) = V(x), \qquad V_\ominus(x) = V(-x)
\end{equation}
for all $x \in \Rpos$.

We record here some useful properties of functions in the class $\halfpot_1(\kappa)$, including an elementary consequence of Lagrange's Mean Value Theorem, which will be used multiple times later.

\begin{lem}\label{lem:halfpot}
Let $W \in \halfpot_1(\kappa)$.
\begin{enumerate}[label=(\roman*)]
\item\label{en:halfpot_doubling} For all $x \in \Rpos$ and $\lambda \geq 1$,
\[
\lambda^{1/\kappa} W(x) \leq W(\lambda x) \leq \lambda^\kappa W(x).
\]
\item\label{en:halfpot_inverse} $W$ is strictly increasing and invertible, and $W^\inv \in \halfpot_1(\kappa)$ too.
\item\label{en:halfpot_lagrange} For all $x,y \in \Rpos$, if $x \geq y$ then
\[
W(x) - W(y) \simeq_\kappa \frac{W(x)}{x} (x-y) .
\]
\end{enumerate}
\end{lem}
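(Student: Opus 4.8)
The plan is to observe first that, $W$ being positive and $C^1$, the defining inequality $\kappa^{-1}W(x)\le xW'(x)\le\kappa W(x)$ of $\halfpot_1(\kappa)$ is nothing but the two-sided bound $\frac{1}{\kappa x}\le(\log W)'(x)=\frac{W'(x)}{W(x)}\le\frac{\kappa}{x}$ on the logarithmic derivative of $W$. Part \ref{en:halfpot_doubling} then follows at once by integrating this inequality over $[x,\lambda x]$ for $\lambda\ge 1$: one gets $\kappa^{-1}\log\lambda\le\log W(\lambda x)-\log W(x)\le\kappa\log\lambda$, and exponentiating yields the claimed bounds.

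For part \ref{en:halfpot_inverse}, the estimate $W'(x)\ge\kappa^{-1}W(x)/x>0$ shows that $W$ is strictly increasing, hence injective, while part \ref{en:halfpot_doubling} gives $W(\lambda x)\to\infty$ and $W(\lambda^{-1}x)\to 0$ as $\lambda\to\infty$; since $W$ is continuous, it maps $\Rpos$ bijectively onto $\Rpos$. By the inverse function theorem $W^\inv$ is $C^1$ with $(W^\inv)'(y)=1/W'(W^\inv(y))$, so, writing $x=W^\inv(y)$, one has $y\,(W^\inv)'(y)=W(x)/W'(x)$; dividing the defining inequality of $\halfpot_1(\kappa)$ through by $W'(x)>0$ rewrites it precisely as $\kappa^{-1}x\le W(x)/W'(x)\le\kappa x$, that is, $\kappa^{-1}W^\inv(y)\le y\,(W^\inv)'(y)\le\kappa W^\inv(y)$, so $W^\inv\in\halfpot_1(\kappa)$.

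Part \ref{en:halfpot_lagrange} is the only point requiring some care, and it is where Lagrange's theorem enters. The case $x=y$ being trivial, assume $x>y>0$ and set $r=x/y>1$. Since $W$ is increasing and $\frac{W(x)}{x}(x-y)=W(x)(1-r^{-1})$, the assertion is equivalent to $\frac{1-W(y)/W(x)}{1-r^{-1}}\simeq_\kappa 1$. By part \ref{en:halfpot_doubling} applied to $x=r\cdot(x/r)$, one has $W(y)/W(x)=r^{-\alpha}$ for some exponent $\alpha=\alpha(x,y)\in[\kappa^{-1},\kappa]$, so it suffices to prove the elementary bound $\frac{1-v^\alpha}{1-v}\simeq_\kappa 1$, uniformly for $v\in(0,1)$ and $\alpha\in[\kappa^{-1},\kappa]$. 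One of the two inequalities is immediate from $v^\alpha\le v$ (if $\alpha\ge 1$) or $v^\alpha\ge v$ (if $\alpha\le 1$), and the complementary one follows from Lagrange's Mean Value Theorem applied to $t\mapsto t^\alpha$ on $[v,1]$, which gives $\frac{1-v^\alpha}{1-v}=\alpha\,\xi^{\alpha-1}$ for some $\xi\in(v,1)\subseteq(0,1)$, whence $\alpha\xi^{\alpha-1}\le\alpha$ when $\alpha\ge 1$ and $\alpha\xi^{\alpha-1}\ge\alpha$ when $\alpha\le 1$. I expect this reduction to be the only mildly delicate aspect: a naive application of the Mean Value Theorem directly to $W$, producing $W(x)-W(y)=W'(\xi)(x-y)\simeq_\kappa\frac{W(\xi)}{\xi}(x-y)$, does not suffice, since $W(t)/t$ need not be comparable to $W(x)/x$ when $t\ll x$; combining \ref{en:halfpot_doubling} with the one-variable inequality above is what circumvents this.
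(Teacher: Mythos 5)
Your proposal is correct; parts (i) and (ii) the paper does not reprove but cites from \cite{DM2}, and your arguments for them (integrating the two-sided bound $\kappa^{-1}/x \le (\log W)'(x) \le \kappa/x$, then the inverse function theorem) are the standard ones. The genuine difference is in part (iii). The paper argues by a case split: if $x \geq 2y$, part (i) gives $W(x) \geq 2^{1/\kappa} W(y)$, hence $W(x)-W(y) \simeq_\kappa W(x)$ and $x-y \simeq x$, and the claim follows; if $x \leq 2y$, it applies the Mean Value Theorem directly to $W$, which is legitimate in that regime precisely because the intermediate point $\xi \in (y,x)$ satisfies $\xi \simeq x$, so that $W'(\xi) \simeq_\kappa W(\xi)/\xi \simeq_\kappa W(x)/x$ by the defining inequality and part (i). Your proof avoids the case split altogether: writing $W(y)/W(x) = r^{-\alpha}$ with $r = x/y > 1$ and $\alpha \in [\kappa^{-1},\kappa]$ (which part (i) guarantees), you reduce the claim to the uniform one-variable inequality $\kappa^{-1} \leq (1-v^\alpha)/(1-v) \leq \kappa$ for $v \in (0,1)$, settled by the Mean Value Theorem applied to $t \mapsto t^\alpha$; this is a clean and correct reduction. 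The two arguments are of comparable length; yours treats all ratios $x/y$ uniformly and isolates the quantitative content in an elementary scalar inequality, while the paper's case distinction is exactly its way of circumventing the pitfall you flag at the end — the ``naive'' MVT applied to $W$ is used there only where $x \le 2y$, i.e.\ where $W(\xi)/\xi \simeq_\kappa W(x)/x$ does hold.
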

\begin{proof}
Parts \ref{en:halfpot_doubling} and \ref{en:halfpot_inverse} are proved in \cite[Propositions 6.4 and 6.5]{DM2}.

As for part \ref{en:halfpot_lagrange}, if $x \geq 2y$, then $W(x) \geq 2^{1/\kappa} W(y)$ by part \ref{en:halfpot_doubling}, whence
\[
x - y \simeq x, \qquad W(x)-W(y) \simeq_\kappa W(x)
\]
and the desired estimate follows. If instead $x \leq 2y$, then, by Lagrange's Mean Value Theorem,
\[
W(x) - W(y) = W'(\xi) (x-y)
\]
for some $\xi \in (y,x)$, and moreover
\[
W'(\xi) \simeq_\kappa \frac{W(\xi)}{\xi} \simeq_\kappa \frac{W(x)}{x}
\]
by \eqref{eq:def_halfpot} and part \ref{en:halfpot_doubling}, as $x \simeq y \simeq \xi$ in this case, whence the desired estimate again follows.
\end{proof}

\subsection{Bohr--Sommerfeld approximation with logarithmic error}
Let us recall from \cite[Theorem 7.6 and Proposition 7.11]{DM2} some useful estimates involving eigenvalues and sublevel sets of the potential of one-dimensional Schr\"odinger operators.

\begin{prp}\label{prp:BS_rough}
Let $V \in \pot_1(\kappa)$. Then
\begin{equation}\label{eq:bohrsommerfeld}
E_n(V)^{1/2} \, |\{ V \leq E_n(V) \}| \simeq_\kappa n.
\end{equation}
for all $n \in \Npos$. Moreover, for all $E,\lambda \in \Rpos$,
\[
|\{ V \leq \lambda E\}| \simeq_{\kappa,\lambda} |\{ V \leq E\}| \simeq_\kappa |\{ V_\oplus \leq E\}| \simeq_\kappa |\{ V_\ominus \leq E\}|.
\]
\end{prp}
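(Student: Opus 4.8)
The two groups of estimates in Proposition \ref{prp:BS_rough} are quoted from \cite{DM2}, but they are of quite different natures, and here I outline the argument I would run. The comparisons between sublevel sets are an elementary consequence of the doubling property. Since $V\in\pot_1(\kappa)$, the functions $V_\oplus,V_\ominus$ from \eqref{eq:Vplusminus} lie in $\halfpot_1(\kappa)$, so by Lemma \ref{lem:halfpot}\ref{en:halfpot_inverse} each is a strictly increasing bijection of $\Rpos$; hence $\{V_\oplus\le E\}=[0,V_\oplus^\inv(E)]$ and $|\{V_\oplus\le E\}|=V_\oplus^\inv(E)$, and likewise for $V_\ominus$. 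Applying Lemma \ref{lem:halfpot}\ref{en:halfpot_doubling} to $V_\oplus^\inv,V_\ominus^\inv\in\halfpot_1(\kappa)$ (separating $\lambda\ge 1$ from $\lambda<1$) gives $V_\oplus^\inv(\lambda E)\simeq_{\kappa,\lambda}V_\oplus^\inv(E)$ and the same for $V_\ominus^\inv$, which is the first comparison. The inequality $V(-x)\le\kappa V(x)$ of $\pot_1(\kappa)$, applied also at $-x$, yields $V_\oplus\simeq_\kappa V_\ominus$ pointwise, hence $\{V_\ominus\le E\}\subseteq\{V_\oplus\le\kappa E\}$ and conversely, so $|\{V_\ominus\le E\}|\simeq_\kappa|\{V_\oplus\le E\}|$ by the doubling just obtained; since $|\{V\le E\}|=|\{V_\oplus\le E\}|+|\{V_\ominus\le E\}|$, the entire chain follows.

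The Bohr--Sommerfeld estimate \eqref{eq:bohrsommerfeld} is the substantive part. Writing $N(E)=\Card\{n\in\Npos\tc E_n(V)\le E\}$ and $\Phi(E)=\int_{\{V\le E\}}(E-V(x))^{1/2}\,dx$, the plan is to prove the two comparisons $\Phi(E)\simeq_\kappa E^{1/2}|\{V\le E\}|$ and $N(E)\simeq_\kappa\Phi(E)$, both with constants depending only on $\kappa$, and then invert. For the first, substituting $u=V_\oplus(x)$ turns $\int_{\{V_\oplus\le E\}}(E-V_\oplus)^{1/2}$ into $\int_0^E(E-u)^{1/2}(V_\oplus^\inv)'(u)\,du$; since $V_\oplus^\inv\in\halfpot_1(\kappa)$ one has $(V_\oplus^\inv)'(u)\simeq_\kappa V_\oplus^\inv(u)/u$, and the power-type bounds $(u/E)^\kappa\le V_\oplus^\inv(u)/V_\oplus^\inv(E)\le(u/E)^{1/\kappa}$ for $0<u\le E$ (a reformulation of Lemma \ref{lem:halfpot}\ref{en:halfpot_doubling}) let one estimate the contributions of $[0,E/2]$ and $[E/2,E]$ separately, both being $\simeq_\kappa E^{1/2}V_\oplus^\inv(E)$; adding the analogue for $V_\ominus$ and using the previous paragraph gives $\Phi(E)\simeq_\kappa E^{1/2}|\{V\le E\}|$.

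For the second comparison $N(E)\simeq_\kappa\Phi(E)$, I would use Dirichlet--Neumann bracketing: decompose $\{V\le E\}$ into dyadic pieces, $\BigO_\kappa(1)$ of them per octave, on each of which $V$ varies within a factor controlled by $\kappa$ (possible because $xV'(x)\simeq_\kappa V(x)$), replace $V$ on each piece by its constant comparison values, count the eigenvalues of the resulting decoupled problems (one-dimensional boxes with constant potentials, for which the count is explicit), and sum; the main terms reproduce $\Phi(E)$ up to $\kappa$-dependent constants, while the accumulated boundary errors are $\BigO_\kappa(1)$ per interface and hence $\BigO_\kappa(N(E))$ in total, and can be absorbed when $E$ is comparable to an eigenvalue. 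Combining the two comparisons yields $N(E)\simeq_\kappa E^{1/2}|\{V\le E\}|$; since $E\mapsto E^{1/2}|\{V\le E\}|=E^{1/2}(V_\oplus^\inv(E)+V_\ominus^\inv(E))$ is continuous, strictly increasing and of power type by doubling, while $N(E_n(V)^-)<n\le N(E_n(V))$, evaluating at $E=E_n(V)$ (and just below) gives $E_n(V)^{1/2}|\{V\le E_n(V)\}|\simeq_\kappa n$.

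The main obstacle is the second comparison $N(E)\simeq_\kappa\Phi(E)$ \emph{with a constant depending only on $\kappa$}: the usual Weyl remainder estimates degrade with the growth of $V$, so one cannot simply quote them, and the decomposition must be arranged so that both the number of pieces and the per-piece errors are controlled purely in terms of $\kappa$ --- which is precisely what the scale-invariant condition $xV'(x)\simeq_\kappa V(x)$ makes possible (equivalently, one can compare $\opH[V]$ on dyadic annuli with power-type potentials, for which the eigenvalue count is classical with a constant independent of the power). The comparability $\Phi(E)\simeq_\kappa E^{1/2}|\{V\le E\}|$ and the passage from the counting function to the eigenvalue statement are, by contrast, routine once the doubling machinery of Lemma \ref{lem:halfpot} is in place.
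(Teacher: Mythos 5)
Your proposal should first be set against what the paper actually does: Proposition \ref{prp:BS_rough} is not proved here at all, but quoted from \cite[Theorem 7.6 and Proposition 7.11]{DM2}; the only kindred argument in this paper (the proof of Proposition \ref{prp:BS_log}) proceeds by Sturm comparison and zero-counting of $\psi_n$ following Titchmarsh, not by bracketing. So yours is an independent proof attempt. Its first half is fine: the comparisons of sublevel sets via Lemma \ref{lem:halfpot} and the pointwise equivalence $V_\oplus \simeq_\kappa V_\ominus$ are correct, and the reduction $\Phi(E) \defeq \int_\RR (E-V)_+^{1/2} \simeq_\kappa E^{1/2}|\{V \leq E\}|$ is the same computation as in Lemma \ref{lem:BS_function}.

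The gap is in the step $N(E)\simeq_\kappa\Phi(E)$. First, the error bookkeeping ``the accumulated boundary errors are $\BigO_\kappa(1)$ per interface and hence $\BigO_\kappa(N(E))$ in total, and can be absorbed'' is not a valid argument in the direction you need most: Neumann bracketing would then only yield $N(E)\le C_\kappa\Phi(E)+C_\kappa N(E)$, which is vacuous unless $C_\kappa<1$. This is repairable: truncating the dyadic decomposition of $\{V\le E\}$ at the scale $E^{-1/2}$, the number of interfaces is $\lesssim_\kappa 1+\log_+\bigl(E^{1/2}|\{V\le E\}|\bigr)$, and since $\sum_j |I_j|(E-\min_{I_j}V)_+^{1/2}\le E^{1/2}|\{V\le E\}|$ one gets $N(E)\le C_\kappa(\Phi(E)+1)$, while a single Dirichlet box inside $\{V\le E/2\}$ gives $N(E)\ge c_\kappa\Phi(E)-1$; together with $N(E_n)=n$ (1D eigenvalues are simple) this proves \eqref{eq:bohrsommerfeld} for $n$ larger than a $\kappa$-dependent constant. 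Second, and this is the genuinely missing idea, the case of bounded $n$ is not covered by any bracketing with additive $\BigO_\kappa(1)$ errors: the inequality $n\le C_\kappa(\Phi(E_n)+1)$ gives $\Phi(E_n)\gtrsim_\kappa n$ only for $n\gtrsim_\kappa 1$, and for the remaining $n$ one needs the ground-state lower bound $E_1^{1/2}|\{V\le E_1\}|\gtrsim_\kappa 1$. That requires a separate uncertainty-principle argument, e.g.: if $\|u\|_2=1$ and $\langle\opH[V]u,u\rangle\le E$, then $\int_{\{V\le 2E\}}u^2\ge 1/2$ while $\|u\|_\infty^2\le 2\|u\|_2\|u'\|_2\le 2E^{1/2}$, whence $E^{1/2}|\{V\le 2E\}|\ge 1/4$, and doubling concludes. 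With the absorption step rewritten as above and this ingredient added, your route closes; as written, these two points are genuine gaps.
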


In what follows we will need a sharper version of the estimate \eqref{eq:bohrsommerfeld}.

\begin{prp}\label{prp:BS_log}
Let $V \in \pot_1(\kappa)$. Then, for all $n \in \Npos$,
\[
\left| \int_\RR (E_n(V) - V)_+^{1/2} - \pi n \right| \lesssim_{\kappa} \log(1+n).
\]
\end{prp}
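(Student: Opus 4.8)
The plan is to relate the quantity $\int_\RR (E_n(V)-V)_+^{1/2}$ to the counting function of the eigenvalues of $\opH[V]$, and then to exploit the well-known connection between the Bohr--Sommerfeld phase integral and the eigenvalue count, tracking the error term carefully to see that it is only logarithmic. Concretely, write $N(E) = \#\{ m \in \Npos : E_m(V) < E \}$ for the eigenvalue counting function, so that $N(E_n(V)) = n-1$ (assuming simplicity of the spectrum, which holds for one-dimensional Schr\"odinger operators). The classical heuristic is $N(E) \approx \frac{1}{\pi}\int_\RR (E-V)_+^{1/2}\,dx$; what we must prove is that this approximation has an $O(\log(1+E_n(V)^{1/2}|\{V\le E_n(V)\}|)) = O(\log(1+n))$ error, using Proposition \ref{prp:BS_rough} for the last equality.

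First I would reduce to a scale-invariant statement: by Proposition \ref{prp:inv_eigenvalue} (or by a direct dilation argument), replacing $V$ by $\tau V$ and rescaling the $x$-variable, one may normalize so that $E_n(V) = 1$, say, and then $|\{V\le 1\}| \simeq_\kappa n$. On the interval where $V \le 1$, the potential $V$ is, by the $\pot_1(\kappa)$ assumptions, comparable to a doubling ``single-well'' profile, and the classically allowed region $\{V < 1\}$ is an interval $(-x_-, x_+)$ with $x_\pm \simeq_\kappa n$. The strategy is then to compare the true eigenvalue count with the phase integral on this interval via a sum of WKB-type (or Pr\"ufer-variable) estimates: splitting $(-x_-,x_+)$ into $O(\log n)$ dyadic pieces away from the turning points plus two transition pieces near $\pm x_\pm$, on each dyadic piece the standard Pr\"ufer phase comparison contributes an $O(1)$ error to the count, and the two turning-point regions each contribute $O(1)$ as well. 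Summing the $O(\log n)$ pieces gives the desired bound. An alternative, perhaps cleaner route: use the Dirichlet--Neumann bracketing on $\{V\le 1\}$ together with explicit one-dimensional eigenvalue asymptotics on each dyadic sub-interval (on which $V$ is roughly constant up to a bounded multiplicative factor), again accumulating $O(1)$ errors over $O(\log n)$ intervals; the half-integer/Maslov correction that produces the precise constant $\pi$ (rather than some other multiple) comes from the turning-point analysis and is what pins down the $\pi n$ on the left-hand side.

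I expect the main obstacle to be making the turning-point (transition region) analysis quantitative and uniform in $V \in \pot_1(\kappa)$: near $x_+$ one cannot treat $V$ as slowly varying, and one must show that the eigenvalue count in a neighborhood of the turning point deviates from $\frac{1}{\pi}\int (1-V)_+^{1/2}$ by only $O(1)$, with an implied constant depending only on $\kappa$ and not on the growth rate of $V$. This is exactly where the doubling hypothesis \eqref{eq:2dassumptions_doubling} (equivalently $V \in \pot_1(\kappa)$, via Lemma \ref{lem:halfpot}\ref{en:halfpot_lagrange}) is essential: it forces $V$ to be comparable near the turning point to the affine function $1 + \frac{V(x_+)}{x_+}(x-x_+) \simeq_\kappa 1 + n^{-1}(x-x_+)$, reducing the local problem to the Airy-type model for which the count/phase-integral discrepancy is a universal $O(1)$. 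Once uniform control on each of the $O(\log n)$ dyadic pieces and on the two turning-point pieces is in hand, summing them and combining with $N(E_n(V)) = n-1$ and the equivalence $E_n(V)^{1/2}|\{V\le E_n(V)\}| \simeq_\kappa n$ from Proposition \ref{prp:BS_rough} yields the claimed inequality $\bigl| \int_\RR (E_n(V)-V)_+^{1/2} - \pi n \bigr| \lesssim_\kappa \log(1+n)$.
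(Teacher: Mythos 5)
Your strategy---compare the Sturm-oscillation zero count $n-1$ of $\psi_n$ with the phase integral, with the logarithmic loss coming from the approach to the turning points---is in substance the same mechanism as the paper's proof, just organized differently. The paper does not decompose dyadically: it fixes a single cut $y_n^\pm = x_n^\pm - c(x_n^\pm/E_n)^{1/3}$ (your Airy scale), bounds both the number of zeros and the phase integral on $\pm[y_n^\pm,x_n^\pm]$ by $O_\kappa(1)$ via Sturm comparison with the constant potential $V(\pm y_n^\pm)$, and on the bulk $\pm(0,y_n^\pm)$ applies the Pr\"ufer-type lemma of \cite[\S 7.3]{titchmarsh}, whose error is $\tfrac12\int |Q_n'|/Q_n = \tfrac14\log\bigl(E_n/(E_n-V(\pm y_n^\pm))\bigr) \simeq_\kappa \log n$ with $Q_n=(E_n-V)^{1/2}$, using Lemma \ref{lem:halfpot}\ref{en:halfpot_lagrange} to evaluate $E_n-V(\pm y_n^\pm)$. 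Your $O(\log n)$ dyadic pieces, each contributing $O(1)$, are exactly this logarithm telescoped, so the two routes are essentially equivalent; the paper's single-cut version just avoids bookkeeping the boundary errors of $O(\log n)$ subintervals.

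Two soft spots in your write-up deserve attention. First, at the turning point, two-sided comparability of $E_n-V$ with the affine model (which is all the doubling hypothesis gives) does not let you ``reduce to the Airy-type model'': Airy approximation needs quantitative higher-order control of $V$, which is unavailable in $\pot_1(\kappa)$. But you do not need Airy at all: since $V$ is increasing on $(0,\infty)$, Sturm comparison with the constant $V(y_n^+)$ on the Airy-scale interval gives at most $1+(x_n^+-y_n^+)\,(E_n-V(y_n^+))^{1/2}/\pi = O_\kappa(1)$ zeros there, and the same quantity dominates the phase integral over that interval; this is precisely the paper's argument. Second, in your Dirichlet--Neumann bracketing alternative, ``$V$ roughly constant up to a bounded multiplicative factor'' is the wrong criterion near the turning point: there $V\simeq E_n$ on many pieces while $E_n-V$ varies wildly, and it is $E_n-V$ (the squared momentum) that must be comparable to a constant on each piece for the per-piece discrepancy to be $O(1)$, so the decomposition must be dyadic in the distance to the turning point, as in your first route. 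Finally, the Maslov/half-integer correction is irrelevant at this level of accuracy: one simply uses that $\psi_n$ has $n-1$ zeros and absorbs all $O(1)$ constants into the error.
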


The proof of the above estimate follows the lines of \cite[\S 7.4]{titchmarsh}. In the case $V$ is convex, the logarithmic divergence in the right-hand side can be replaced by a constant, as shown in \cite[\S 7.5]{titchmarsh} and \cite[Theorem 4.2]{dallara_martini}; however the weaker logarithmic bound does not require convexity and will be enough for our purposes.

\begin{proof}[Proof of Proposition \ref{prp:BS_log}]
Let $x_n^\pm \in \Rpos$ be such that $V(\pm x_n^\pm) = E_n \defeq E_n(V)$; in other words, the points $\pm x_n^\pm$ are the transition points corresponding to the energy level $E_n$. Let $y_n^\pm \in (0,x_n^\pm)$ be points to be fixed later, and define $Q_n(x) = (E_n-V(x))^{1/2}$ for $x \in (-x_n^-,x_n^+)$.

By classical Sturm--Liouville theory, $\psi_n \defeq \psi_n(\cdot;V)$ has $n-1$ zeros, which are all in the interval $(-x_n^-,x_n^+)$.
Note now that $V-E_n \geq V(y_n^\pm)-E_n$ on $\pm[y_n^\pm,x_n^\pm]$; by Sturm's comparison theorem, this implies that on $\pm[y_n^\pm,x_n^\pm)$ there are at most
\[
1+(x_n^\pm-y_n^\pm) Q(\pm y_n^\pm)/\pi
\]
zeros of $\psi_n$. Note also that
\[
\int_{\pm[y_n^\pm,x_n^\pm]} (E_n-V)^{1/2} \leq (x_n^\pm-y_n^\pm) Q_n(\pm y_n^\pm).
\]
Hence, if $Z_n^\pm$ denotes the number of zeros of $\psi_n$ in $\pm (0,y_n^\pm)$, then
\begin{multline*}
\left| \int_\RR (E_n - V)_+^{1/2} - \pi n \right| \\\leq 3\pi + 2 \sum_\pm (x_n^\pm-y_n^\pm) Q_n(\pm y_n^\pm)
+ \sum_\pm \left| \int_{\pm (0,y_n^\pm)} (E_n - V)^{1/2} - \pi Z_n^\pm \right|.
\end{multline*}

On the other hand, by \cite[\S 7.3, Lemma]{titchmarsh} (see also \cite[Appendix]{dallara_martini}),
\[
\left| \int_{\pm (0,y_n^\pm)} (E_n - V)^{1/2} - \pi Z_n^\pm \right| \leq \pi + \frac{1}{2} \int_{\pm (0,y_n^\pm)} \frac{|Q_n'|}{Q_n} .
\]
Since $Q'$ is increasing on $(-y_n^\pm,0)$ and decreasing on $(0,y_n^\pm)$,
\[
\int_{\pm (0,y_n^\pm)} \frac{|Q_n'|}{Q_n} = \log \frac{Q_n(0)}{Q_n(\pm y_n^\pm)} = \frac{1}{2} \log \frac{E_n}{E_n-V(\pm y_n^\pm)},
\]
and therefore
\[
\left| \int_\RR (E_n - V)_+^{1/2} - \pi n \right| \leq 5\pi + \sum_\pm \left[ 2 (x_n^\pm-y_n^\pm) Q_n(\pm y_n^\pm)
+ \frac{1}{4} \log \frac{E_n}{E_n-V(\pm y_n^\pm)} \right].
\]

We now choose $y_n^\pm = x_n^\pm - c(x_n^\pm/E_n)^{1/3}$ for a suitable $c>0$. Note that
\[
(x_n^\pm/E_n)^{1/3} = x_n^\pm / (x_n^\pm E_n^{1/2})^{2/3} \simeq_{\kappa} n^{-2/3} x_n^\pm, 
\]
by 
Proposition \ref{prp:BS_rough};
 so, by choosing $c = c(\kappa)$ sufficiently small, we can ensure that $y_n^\pm \simeq_{\kappa} x_n^\pm$. Hence, by 
Lemma \ref{lem:halfpot},
 we deduce that
\[
E_n - V(\pm y_n^\pm) \simeq_{\kappa} \frac{V(\pm x_n^\pm)}{x_n^\pm} (x_n^\pm-y_n^\pm) \simeq_{\kappa} \frac{E_n}{x_n^\pm} \cdot \left(\frac{x_n^\pm}{E_n}\right)^{1/3} = \left(\frac{E_n}{x_n^\pm}\right)^{2/3}
\]
and therefore
\[
(x_n^\pm-y_n^\pm) Q_n(\pm y_n^\pm) \simeq_{\kappa} 1, \quad \frac{E_n}{E_n-V(\pm y_n^\pm)}  \simeq_{\kappa} n^{2/3},
\]
which proves the desired estimate.
\end{proof}

\subsection{A useful change of variables}
The lemma below will be used to extract and exploit the ``rough gap information'' from Proposition \ref{prp:BS_log}.

\begin{lem}\label{lem:BS_function}
For $V \in \pot_1(\kappa)$, define $K_V : \Rpos \to \Rpos$ by
\[
K_V(t) = t^{-1/2} \int_\RR (t-V)_+^{1/2}
\]
for all $t>0$. Then
\[
K_V(t) \simeq_{\kappa} t K_V'(t) \simeq_{\kappa} |\{ V \leq t \}|.
\]
\end{lem}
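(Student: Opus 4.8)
The plan is to compute $K_V'(t)$ explicitly, relate all three quantities $K_V(t)$, $tK_V'(t)$, $|\{V\le t\}|$ to the single geometric quantity $|\{V\le t\}|$, and exploit the homogeneity-type control on $V$ coming from $V\in\pot_1(\kappa)$. First I would differentiate under the integral sign: writing $\int_\RR (t-V)_+^{1/2} = \int_{\{V<t\}} (t-V(x))^{1/2}\,dx$, one gets
\[
\frac{d}{dt}\int_\RR (t-V)_+^{1/2} = \frac{1}{2}\int_\RR (t-V)_+^{-1/2},
\]
so that
\[
K_V'(t) = -\tfrac12 t^{-3/2}\int_\RR (t-V)_+^{1/2} + \tfrac12 t^{-1/2}\int_\RR (t-V)_+^{-1/2},
\]
and hence
\[
tK_V'(t) = -\tfrac12 K_V(t) + \tfrac12 t^{-1/2}\int_\RR (t-V)_+^{-1/2}.
\]
So everything reduces to understanding the two integrals $\int_\RR (t-V)_+^{1/2}$ and $\int_\RR (t-V)_+^{-1/2}$ and showing each is $\simeq_\kappa t^{1/2}|\{V\le t\}|$; then $K_V(t)\simeq_\kappa|\{V\le t\}|$ follows at once, and $tK_V'(t)$ is a linear combination of two quantities each $\simeq_\kappa |\{V\le t\}|$ with the leading coefficients $-\tfrac12$ and $+\tfrac12$ — so I must be careful that the combination does not vanish, i.e. I need a genuine \emph{lower} bound $t^{-1/2}\int_\RR (t-V)_+^{-1/2} \ge (1+c(\kappa))K_V(t)$ for some $c(\kappa)>0$, not merely $\simeq_\kappa$.

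The key tool is Lemma \ref{lem:halfpot}\ref{en:halfpot_lagrange} applied to $V_\oplus,V_\ominus\in\halfpot_1(\kappa)$: if $x_t^\pm$ denote the transition points $V(\pm x_t^\pm)=t$, then for $x\in(0,x_t^+)$ one has $t-V(x)=t-V_\oplus(x)\simeq_\kappa (t/x_t^+)(x_t^+-x)$, and similarly on the negative side. Substituting this into the integrals turns them into elementary integrals of $(x_t^+-x)^{\pm1/2}$ over $(0,x_t^+)$, which evaluate to $\simeq_\kappa t^{1/2}(x_t^+)^{1/2}\cdot (x_t^+)^{\mp1/2}\cdot \ldots$; more precisely $\int_0^{x_t^+}(t-V)^{1/2}\simeq_\kappa (t/x_t^+)^{1/2}\int_0^{x_t^+}(x_t^+-x)^{1/2}\,dx\simeq_\kappa t^{1/2}x_t^+$, and $\int_0^{x_t^+}(t-V)^{-1/2}\simeq_\kappa (t/x_t^+)^{-1/2}\int_0^{x_t^+}(x_t^+-x)^{-1/2}\,dx\simeq_\kappa t^{-1/2}x_t^+\cdot(t/x_t^+)\cdot\ldots$; carrying the bookkeeping through, both $\int_\RR(t-V)_+^{1/2}$ and $t\int_\RR(t-V)_+^{-1/2}$ come out $\simeq_\kappa t^{1/2}(x_t^++x_t^-)\simeq_\kappa t^{1/2}|\{V\le t\}|$, using Proposition \ref{prp:BS_rough} (or just $|\{V\le t\}|=x_t^++x_t^-$) for the last step. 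The comparability $\int_\RR(t-V)_+^{-1/2}$ against $t^{-1/2}|\{V\le t\}|$ is where the upper bound $xV'(x)\le\kappa V(x)$ matters: it guarantees $t-V(x)\gtrsim_\kappa (t/x_t^+)(x_t^+-x)$ so the singular integral does not blow up.

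The main obstacle I anticipate is the lower bound needed to control $tK_V'(t)$ away from zero: I must show $t^{-1/2}\int_\RR(t-V)_+^{-1/2} - K_V(t)\gtrsim_\kappa |\{V\le t\}|$, i.e. that the positive term strictly dominates. One clean way is to bypass the cancellation entirely: since $K_V$ is increasing (its derivative's sign is not obvious from the formula, but it is, because $K_V(t)=t^{-1/2}\int(t-V)_+^{1/2}$ and the substitution $V=ts$ shows $K_V(t)=t^{-1/2}\cdot|\{V\le t\}|\cdot\int_0^1(1-s)^{1/2}\,d\mu_t(s)$ — hmm, this still has $t$-dependence through $\mu_t$), so instead I would argue directly with the integral representation after the change of variables $x=x_t^+ u$ on the positive half (and symmetrically), which converts $K_V(t)$ into $t^{1/2}$ times an integral whose $t$-dependence is mild and monotone by the doubling estimate Lemma \ref{lem:halfpot}\ref{en:halfpot_doubling}; differentiating that representation gives $tK_V'(t)\simeq_\kappa K_V(t)$ directly. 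Alternatively, and more robustly, I would prove $tK_V'(t)\gtrsim_\kappa K_V(t)$ by a scaling/rescaling comparison: from $V\in\pot_1(\kappa)$ one gets $\{V\le\lambda t\}\supseteq\lambda^{1/\kappa}$-dilate-type containments, hence $|\{V\le\lambda t\}|\ge\lambda^{1/(2\kappa)}|\{V\le t\}|$ type bounds, which force $K_V(\lambda t)/K_V(t)\ge\lambda^{c(\kappa)}$ and therefore $tK_V'(t)\ge c(\kappa)K_V(t)$ by a standard integration argument. Either route works; the second is the one I would write up, since it isolates exactly the $\pot_1(\kappa)$ input and avoids delicate cancellation estimates.
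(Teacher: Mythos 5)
Your plan is correct, but it takes a genuinely different route from the paper, and the comparison is worth recording. The paper splits $V$ into the half-potentials $V_\oplus,V_\ominus\in\halfpot_1(\kappa)$ and, crucially, recombines the two terms that you keep separate: differentiating under the integral sign yields the identity $2tK_W'(t)=t^{-1/2}\int_0^{W^\inv(t)}W\,(t-W)^{-1/2}$, whose integrand is positive, so the cancellation you (rightly) worry about never arises; both $K_W(t)$ and $tK_W'(t)$ are then computed by the change of variables $\tau=W(x)/t$, after inserting $xW'(x)/W(x)\simeq_\kappa 1$, giving $\int_0^1 W^\inv(\tau t)(1-\tau)^{\pm 1/2}\,\tau^{-1}d\tau\simeq_\kappa W^\inv(t)$ via the doubling of $W^\inv$. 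You instead linearize $t-V$ at the transition points via Lemma \ref{lem:halfpot}\ref{en:halfpot_lagrange}, which is valid and does give $K_V(t)\simeq_\kappa|\{V\le t\}|$ together with the upper bound $tK_V'(t)\lesssim_\kappa|\{V\le t\}|$, and you handle the delicate lower bound by scaling. That scaling argument closes the gap, but only if it is set up with constant exactly one: from $V(\lambda^{1/\kappa}x)\le\lambda V(x)$ (Lemma \ref{lem:halfpot}\ref{en:halfpot_doubling}) and the substitution $x\mapsto\lambda^{1/\kappa}x$ one gets the clean inequality $K_V(\lambda t)\ge\lambda^{1/\kappa}K_V(t)$ for all $\lambda\ge 1$, and then $tK_V'(t)\ge\kappa^{-1}K_V(t)$ follows by taking the difference quotient at $\lambda=1$ (not by an ``integration argument'', which runs in the opposite direction); a growth bound known only up to $\simeq_\kappa$ constants would be useless for $\lambda$ near $1$, so this constant-one point must be made explicit in a write-up, and your ``dilate-type containment'' phrasing suggests you have exactly this in mind. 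Your first alternative (differentiating a rescaled representation) is too vague to count as a proof, but you discard it anyway. Two slips to correct: $tK_V'(t)=-\tfrac12 K_V(t)+\tfrac12 t^{1/2}\int_\RR(t-V)_+^{-1/2}$ (you wrote $t^{-1/2}$), and accordingly $\int_\RR(t-V)_+^{-1/2}\simeq_\kappa t^{-1/2}|\{V\le t\}|$, not $t^{1/2}|\{V\le t\}|$; your later bookkeeping is consistent with the corrected versions. What each approach buys: the paper's recombination makes positivity of $tK_V'$ manifest and avoids all constant-tracking, while your version isolates the geometric input (the Lagrange-type linearization and the exact dilation property of sublevel sets) and would be easy to adapt to other homogeneous-type quantities; even so, you may wish to note the identity $2tK_V'(t)=t^{-1/2}\int_\RR V\,(t-V)_+^{-1/2}$, since it gives the lower bound in one line by restricting to $\{t/2<V\le t\}$ and using doubling of the sublevel sets.
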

\begin{proof}
Note first that, if $V_\oplus,V_\ominus$ are defined as in \eqref{eq:Vplusminus}, then
\[
|\{ V \leq t \}| = |\{ V_\oplus \leq t \}| + |\{ V_\ominus \leq t \}| = V_\oplus^\inv(t) + V_\ominus^\inv(t),
\]
and moreover
\[
K_V = K_{V_\oplus} + K_{V_\ominus},
\]
where, for $W : \Rpos \to \Rpos$, we define
\[
K_W(t) = t^{-1/2} \int_0^\infty (t-W)_+^{1/2}.
\]
It is then enough to prove that, for all $W \in \halfpot_1(\kappa)$,
\[
K_W(t) \simeq_{\kappa} t K_W'(t) \simeq_{\kappa} W^{\inv}(t).
\]

Now, for all $t > 0$,
\[
K_W(t) = t^{-1/2} \int_0^{W^{\inv}(t)} (t-W)^{1/2} \simeq_{\kappa} t^{-1/2} \int_0^{W^{\inv}(t)} \frac{x W'(x)}{W(x)} (t-W(x))^{1/2} \,dx ,
\]
and the change of variables $\tau = W(x)/t$ yields
\[
K_W(t) \simeq_{\kappa} \int_0^1 W^{\inv}(\tau t) \, (1-\tau)^{1/2} \frac{d\tau}{\tau} \simeq_{\kappa} W^{\inv}(t);
\]
the last equivalence is consequence of the fact (see Lemma \ref{lem:halfpot}) that $\tau^{\kappa} W^{\inv}(t) \leq W^{\inv}(\tau t) \leq \tau^{1/\kappa} W^{\inv}(t)$ for $\tau \in (0,1)$.

Similarly, one readily sees that
\[
2t K_W'(t) = t^{-1/2} \int_0^{W^{\inv}(t)} \frac{W}{(t-W)^{1/2}} \simeq_{\kappa} t^{-1/2} \int_0^{W^{\inv}(t)} \frac{x W'(x)}{(t-W(x))^{1/2}} \,dx
\]
and again the change of variables $\tau = W(x)/t$ yields
\[
t K_W'(t) \simeq_{\kappa} \int_0^1 W^{\inv}(\tau t) \, (1-\tau)^{-1/2} \,d\tau \simeq_{\kappa} W^{\inv}(t),
\]
as desired.
\end{proof}

\subsection{Proof of the variant of the spectral projector bound}
Here we prove Theorem \ref{thm:new_estimate}, that is, the estimate
\[
\sum_{\substack{n \in \Npos \\ \lambda/\Xi_n(\lambda;V) \in [A,2A] }} \psi_n(x; \Xi_n(\lambda;V) V)^2 \lesssim_{\kappa,a,\theta,\delta} \lambda^{1/2} (\chr_{V \leq 8 A} + e^{-c \lambda^{1/2} |x|} \chr_{V > 8A})
\]
for all $V \in \tilde\pot$, $\lambda,A \in \Rpos$, $x \in \RR$.

Recall from \cite[Theorem 7.7]{DM2} that, for all $V \in \pot_1(\kappa)$, there exists $c=c(\kappa)$ such that
\begin{equation}\label{eq:exp_decay_eigenfunctions}
|\psi_n(x;V)| \lesssim_\kappa |\{ V \leq E_n(V) \}|^{-1/2} \exp(-c |x| \sqrt{V(x)})
\end{equation}
whenever $n \in \Npos$ and $x \in \{ V \geq 4E_n\}$. Recall moreover that, by assumption, $\tilde\pot$ is a subcone of $\pot_1(\kappa)$ such that, for some $\theta,\delta \in (0,1)$ and $a>1$, 
\begin{equation}\label{eq:transition_eigenfunctions}
|\psi_n(x;V)| \leq a \, |\{ V \leq E_n(V) \}|^{-1/2}  \min\{ n^{\delta/2} , E_n(V)^{\theta/2} |V(x)-E_n(V)|^{-\theta/2} \} 
\end{equation}
for all $V \in \tilde\pot$, $x \in \RR$, $n \in \Npos$. For simplicity, in the rest of the proof, we will write $\lesssim$ and $\simeq$ instead of $\lesssim_{\kappa,a,\theta,\delta}$ and $\simeq_{\kappa,a,\theta,\delta}$.

Fix $V \in \tilde\pot$ and let $\Xi_n \defeq \Xi_n(\cdot;V)$.
First note that, if $V(x) > 8A$ and $\lambda/\Xi_n(\lambda) \in [A,2A]$, then $\Xi_n(\lambda) V(x) > 4 \lambda$, and therefore by \eqref{eq:exp_decay_eigenfunctions} we deduce that
\[\begin{split}
\psi_n(x;\Xi_n(\lambda) V)^2 &\lesssim |\{ V \leq \lambda/\Xi_n(\lambda) \}|^{-1} \exp(-2c |x| \sqrt{\Xi_n(\lambda) V(x)}) \\
&\leq |\{ V \leq A \}|^{-1} \exp(-4c\lambda^{1/2} |x|).
\end{split}\]
On the other hand, by Proposition \ref{prp:BS_rough},
\begin{equation}\label{eq:roughBS}
\lambda^{1/2} |\{ V \leq A \}| \simeq \lambda^{1/2} |\{ V \leq \lambda/\Xi_n(\lambda) \}| \simeq n
\end{equation}
 so the number of summands is $\lesssim \lambda^{1/2} |\{ V \leq A \}|$, and we deduce that
\[
\sum_{\substack{n \in \Npos \\ \lambda/\Xi_n(\lambda;V) \in [A,2A] }}  \psi_n(x;\Xi_n(\lambda) V)^2 \lesssim \lambda^{1/2} \exp(-4c\lambda^{1/2} |x|)
\]
whenever $V(x) > 8A$.

It remains to prove the uniform bound on $\{V \leq 8A\}$. For this, we use \eqref{eq:transition_eigenfunctions} to obtain that
\begin{equation}\label{eq:pointwise_rewritten}
\begin{split}
\psi_n(x;\Xi_n(\lambda) V)^2 &\lesssim |\{ V \leq \lambda/\Xi_n(\lambda) \}|^{-1}  \min\{ n^{\delta} , \lambda^{\theta} |\Xi_n(\lambda) V(x)-\lambda|^{-\theta} \} \\
&\simeq |\{ V \leq A \}|^{-1} \min\{ n^{\delta} , A^{\theta} |V(x)-\lambda/\Xi_n(\lambda)|^{-\theta} \}.
\end{split}
\end{equation}
Define $K_V$ as in Lemma \ref{lem:BS_function}, and recall that
\begin{equation}\label{eq:K_V_prop}
K_V(t) \simeq t K_V'(t) \simeq |\{ V \leq t \}|.
\end{equation}
Since $\lambda/\Xi_n(\lambda) \simeq A \gtrsim V(x)$, we deduce, by Lemma \ref{lem:halfpot},
 that
\[\begin{split}
|V(x)-\lambda/\Xi_n(\lambda)| &\simeq \frac{A}{K_V(A)} |K_V(V(x)) - K_V(\lambda/\Xi_n(\lambda))| \\
&= \frac{A}{\lambda^{1/2} K_V(A)} |\lambda^{1/2} K_V(V(x)) - \lambda^{1/2} K_V(\lambda/\Xi_n(\lambda))|.
\end{split}\]
Set now $a = \lambda^{1/2} K_V(A)$, $b = \lambda^{1/2} K_V(V(x))$, $t_n = \lambda^{1/2} K_V(\lambda/\Xi_n(\lambda))$, and observe that $t_n \simeq a \simeq n \gtrsim b$ by \eqref{eq:roughBS} and \eqref{eq:K_V_prop}, so the bound \eqref{eq:pointwise_rewritten} can be rewritten as
\[
\psi_n(x;\Xi_n(\lambda) V)^2 \lesssim \lambda^{1/2}  \min\{ a^{\delta-1} , a^{\theta-1} |b-t_n|^{-\theta} \}.
\]
Furthermore,
\[
t_n = \lambda^{1/2} K_V(\lambda/\Xi_n(\lambda)) = \int_\RR (\lambda-\Xi_n(\lambda) V)_+^{1/2},
\]
and therefore
\[
|t_n - \pi n| \lesssim \log(1+n) \lesssim n^{1-\delta}
\]
by Proposition \ref{prp:BS_log} (applied to the potential $\Xi_n(\lambda) V$) and the fact that $\delta<1$. As a consequence, we can apply Lemma \ref{lem:sum_int} and obtain that
\[
\sum_{\substack{n \in \Npos \\ \lambda/\Xi_n(\lambda;V) \in [A,2A] }} \psi_n(x;\Xi_n(\lambda) V)^2 \lesssim \lambda^{1/2} \sum_{\substack{n \in \Npos \\ t_n \simeq a }}  \min\{ a^{\delta-1} , a^{\theta-1} |b-t_n|^{-\theta} \} \lesssim \lambda^{1/2},
\]
as desired.

\section{Pointwise eigenfunction estimates in the transition region}\label{s:pointwiseeigenfunction}

\subsection{Summary of the results}
Let $\kappa \geq 1$. Let us introduce the following subclasses of $\pot_1(\kappa)$. Recall that a \emph{modulus of continuity} is a function $\omega : [0,\infty] \to [0,\infty]$ such that $\lim_{t \to 0} \omega(t) = 0$.

\begin{dfn}
If $\omega$ is a modulus of continuity, let $\pot_{1,\uc}(\kappa,\omega)$ be the class of potentials $V \in \pot_1(\kappa)$ such that
\[
\left|\log (V'(\pm e^{t}) /V'(\pm e^{t'}))\right| \leq \omega(|t-t'|) \qquad\text{for all } t,t'\in \RR.
\]
In other words, $\omega$ is a modulus of continuity for the functions $t \mapsto \log  |V'(\pm e^t)|$.
\end{dfn}

\begin{rem}\label{rem:hoelder_uc}
It is easy to see that, for all $\theta \in (0,1)$, $\pot_{1+\theta}(\kappa) \subseteq \pot_{1,\uc}(\kappa,\omega_{\kappa,\theta})$, where $\omega_{\kappa,\theta}$ is a suitable modulus of continuity such that $\omega_{\kappa,\theta}(t) \simeq_{\kappa,\theta} t^\theta$ for $t$ small.
\end{rem}

\begin{dfn}
Let $\pot_{1,\cv}(\kappa)$ be the class of the convex potentials in $\pot_1(\kappa)$.
\end{dfn}

\begin{dfn}
For $k \geq 2$, let $\pot_k(\kappa)$ be the class of the potentials $V \in \pot_1(\kappa)$ which are $C^k$ on $\RR \setminus \{0\}$ and satisfy the estimates
\[
|x^{\ell} V^{(\ell)}(x)| \leq \kappa V(x) \qquad \text{for all } x \neq 0 \text{ and } \ell=2,\dots,k.
\]
\end{dfn}

The aim of this section is to prove the following pointwise estimates for the eigenfunctions of $\opH[V] = -\partial_x^2 + V$.

\begin{thm}\label{thm:pointwise_est}
For all $x \in \RR$ and $n \in \Npos$, the estimates
\begin{align*}
|\psi_n(x;V)| &\lesssim_{\bar\kappa,\alpha} \frac{1}{|\{V \leq E_n(V)\}|^{1/2}} \min \{ n^{2\alpha/3}, |1-V(x)/E_n(V)|^{-\alpha} \} , \\
|\psi_n'(x;V)| &\lesssim_{\bar\kappa,\alpha} \frac{E_n(V)^{1/2}}{|\{V \leq E_n(V)\}|^{1/2}} \max \{ n^{(2\alpha-1)/3}, (1-V(x)/E_n(V))_+^{1/2-\alpha} \} 
\end{align*}
hold in the following cases:
\begin{enumerate}[label=(\roman*)]
\item\label{en:pointwise_est_weak} with $\bar\kappa=\kappa$ and $\alpha=1/2$, whenever $V \in \pot_1(\kappa)$;
\item\label{en:pointwise_est_new} with $\bar\kappa=(\kappa,\omega)$ and $\alpha \in (1/4,1/2)$, whenever $V \in \pot_{1,\uc}(\kappa,\omega)$;
\item\label{en:pointwise_est_conv} with $\bar\kappa=\kappa$ and $\alpha = 1/4$, whenever $V \in \pot_{1,\cv}(\kappa)$;
\item\label{en:pointwise_est_c3} with $\bar\kappa=\kappa$ and $\alpha = 1/4$, whenever $V \in \pot_3(\kappa)$.
\end{enumerate}
\end{thm}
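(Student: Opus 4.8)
The proof of Theorem \ref{thm:pointwise_est} rests on a single monotonicity device — the ``Sonin function'' method — adapted to the situation at hand. I would work with the transition point $x_n^+ \in \Rpos$ where $V(x_n^+) = E_n(V)$ (and symmetrically with $x_n^-$), and normalise by rescaling: replace $V$ by a dilate so that $E_n(V) = 1$ and the transition point sits at a fixed location, say $x = 1$; this is legitimate because $\pot_1(\kappa)$, $\pot_{1,\uc}(\kappa,\omega)$, $\pot_{1,\cv}(\kappa)$, and $\pot_k(\kappa)$ are all scaling-invariant subcones and the right-hand sides scale correctly. After this reduction, the eigenfunction $u = \psi_n(\cdot;V)$ solves $-u'' + (V-1)u = 0$, and the estimates are local statements near $x = 1$ together with the already-known global behaviour away from the transition region.

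\textbf{The monotone functional.} The core of the argument is to find a weight $p(x)$, depending on $V$, such that the ``Sonin-type'' quantity
\[
S(x) = p(x) u'(x)^2 + q(x) u(x)^2
\]
is monotone on a suitable one-sided neighbourhood of the transition point, where $q = p \cdot (V-1)$ or a closely related expression. Differentiating and using the equation, $S'(x) = p'(x) u'(x)^2 + (q'(x) - 2p(x)(V(x)-1)\cdot\text{something}) u(x)^2 + \ldots$; the point is to choose $p$ so that the cross terms cancel and $S'$ has a definite sign, which in the classical Liouville normal form amounts to taking $p = (1-V)^{-1/2}$ in the oscillatory region $\{V < 1\}$ and $p = (V-1)^{-1/2}$ in the forbidden region $\{V > 1\}$. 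The regularity hypotheses enter precisely here: to control $(p^{-1} S)'$ or $S'$ one needs bounds on $p'/p$, i.e.\ on $V'/(1-V)$, which near the transition point behaves like $(x_n^+)^{-1}(1-V)^{-1} \cdot (x-x_n^+)^{?}$ — and the error terms generated by the non-constancy of $V$ (as opposed to the exactly solvable Airy case $V-1 \sim c(x-1)$) are governed by the modulus of continuity $\omega$ of $\log|V'|$ in case \ref{en:pointwise_est_new}, by convexity in case \ref{en:pointwise_est_conv}, and by the $C^3$ bounds $|x^\ell V^{(\ell)}| \lesssim V$ in case \ref{en:pointwise_est_c3}. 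In case \ref{en:pointwise_est_weak}, with only $V \in \pot_1(\kappa)$, one gets the crude exponent $\alpha = 1/2$, which is just the WKB amplitude bound $|u| \lesssim |\{V\le E_n\}|^{-1/2}(1-V/E_n)^{-1/4}$... no, with $\alpha=1/2$ it is weaker, reflecting that without extra smoothness the Sonin functional is only monotone after sacrificing a power.

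\textbf{Assembling the bound.} Once $S$ is shown monotone on the relevant interval, one compares its value at a generic point $x$ near the transition point with its value at a reference point at distance $\simeq (x_n^+ E_n^{1/2})^{-2/3} x_n^+ \simeq n^{-2/3} x_n^+$ from it — the natural ``Airy scale'' — where the already-established estimates of \cite{DM2} (the oscillatory-region bound $|\psi_n| \lesssim |\{V\le E_n\}|^{-1/2}(1-V/E_n)^{-1/4}$ when $V \ll E_n$, the decay estimate \eqref{eq:exp_decay_eigenfunctions} when $V \gg E_n$, and the Bohr--Sommerfeld relation \eqref{eq:bohrsommerfeld} in the form $E_n^{1/2}|\{V\le E_n\}| \simeq n$) pin down $S$. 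Translating $S(x) \lesssim S(\text{reference point})$ back through the definition $S = p u'^2 + q u^2$ yields simultaneously the bound on $|u|$ and on $|u'|$, with the $\min$/$\max$ structure appearing because the monotone-functional bound takes over from the naive bound exactly when $|1-V/E_n|^{-\alpha}$ reaches the plateau value $n^{2\alpha/3}$, i.e.\ when $x$ is within the Airy scale of the transition point. The matching of exponents — $n^{2\alpha/3}$ for $\psi_n$ versus $n^{(2\alpha-1)/3}$ for $\psi_n'$ — is forced by the scaling $|\psi_n'| \simeq E_n^{1/2} |\psi_n|$ at the Airy scale, where $|1-V/E_n| \simeq n^{-2/3}$, since differentiating loses a factor $(1-V/E_n)^{1/2} E_n^{1/2}$.

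\textbf{Main obstacle.} The delicate point is case \ref{en:pointwise_est_new}: with only a modulus of continuity on $\log|V'|$ and no pointwise control on $V''$, one cannot directly differentiate $p = (1-V)^{-1/2}$ twice, so the monotonicity of $S$ must be established in an integrated or regularised form — e.g.\ by a dyadic decomposition of the interval between $x$ and the transition point, on each piece of which $\omega$ provides that $V'$ is nearly constant, and then summing the increments of $S$. Controlling the accumulated error as a convergent series (which is what forces $\alpha < 1/2$ rather than $\alpha = 1/2$, and what degrades the exponent continuously as $\theta \to 0$ via $\omega$) is the technical heart of the section; cases \ref{en:pointwise_est_conv} and \ref{en:pointwise_est_c3} are then either genuinely easier (convexity gives one-signed $V''$, hence one-signed error terms and the clean value $\alpha = 1/4$) or reduce to a direct computation with the $C^3$ bounds. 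I would also need to verify at the outset that the various subclasses are stable under dilation and that the ``single-well'' structure guarantees exactly one transition point on each half-line, so that the whole analysis localises cleanly around $x_n^\pm$.
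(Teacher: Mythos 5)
Your skeleton is broadly the right one: a Sonin-type monotone functional near the turning point, matched against the known uniform bound well inside the classical region and the exponential decay well outside it, with the Bohr--Sommerfeld relation $E_n^{1/2}|\{V\le E_n\}|\simeq n$ converting the Airy scale into the exponents $n^{2\alpha/3}$ and $n^{(2\alpha-1)/3}$. Case (i) in the paper does not use a Sonin function at all but only the two elementary monotonicity identities for $(E_n-V)\psi_n^2+(\psi_n')^2$ and $\psi_n^2+(\psi_n')^2/(E_n-V)$, and case (iv) is indeed handled with the weight $(E_n-V)^{1/4}$ essentially as you describe. The genuine gap is in cases (ii) and (iii), which are the whole point of the theorem (since $\pot_{1+\theta}(\kappa)\subseteq\pot_{1,\uc}(\kappa,\omega)$ and the potentials there are only $C^1$). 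You correctly observe that $p=(1-V/E_n)^{-1/2}$ cannot be differentiated, but your proposed fix --- establishing monotonicity ``in an integrated or regularised form'' via a dyadic decomposition on which $V'$ is nearly constant --- is not worked out and is not what makes the argument go through. The paper's key device is to change the weight: one takes $f_n(x)=(x_n-x)^{\alpha}\psi_n(x)$, with $x_n$ the turning point, so that the coefficients $A_n=-\alpha(x_n-x)^{-1}$ and $B_n=E_n-V+\alpha(\alpha+1)(x_n-x)^{-2}$ of the transformed equation involve no derivative of $V$ beyond $V'$, and the sign of $S_n'$ reduces via Lagrange's Mean Value Theorem to the single inequality $4\alpha V'(\xi)\ge V'(x)$ for some $\xi\in(x,x_n)$. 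This is exactly where the hypotheses enter: for $\alpha>1/4$ one has $\log(4\alpha)>0$ and the modulus of continuity of $\log V'(\pm e^t)$ guarantees the inequality on a fixed multiplicative neighbourhood $[e^{-\delta}x_n,x_n)$ of the turning point; for convex $V$ one has $V'(\xi)\ge V'(x)$ outright and may take $\alpha=1/4$. Without this change of weight your functional is not even well defined in a differentiable way, and it is unclear that any summation of dyadic increments would close, since the errors you would need to control are exactly second-derivative quantities that the class $\pot_{1,\uc}$ does not bound.

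A second, smaller gap: you assert that the monotone-functional bound ``yields simultaneously the bound on $|u|$ and on $|u'|$'' including the plateau values $n^{2\alpha/3}$ and $n^{(2\alpha-1)/3}$. The Sonin estimate alone degenerates at the turning point (there $B_n$ is controlled only by the $(x_n-x)^{-2}$ term), and the uniform bounds require a separate analysis of the zeros and local extrema of $\psi_n$ and $\psi_n'$: Sturm comparison locates the outermost zeros of $\psi_n$ at distance $\simeq n^{-2/3}|\{V\le E_n\}|$ from the turning points, the identities from case (i) show that the local maxima of $\psi_n^2$ increase outward while those of $(\psi_n')^2$ increase inward, and one then bounds $\|\psi_n\|_\infty$ by integrating $\psi_n'$ between the outermost zero of $\psi_n$ and the adjacent extremum of $\psi_n$. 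This step is also what propagates the derivative bound outside the classical region (where $(\psi_n')^2$ is monotone). Your sketch gestures at the Airy scale but does not supply this mechanism, and without it the $\min$/$\max$ structure of the stated estimates is not obtained.
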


We point out that, well inside the classical region (say, where $V(x) \leq E_n(V)/2$), the above bounds reduce to the uniform bound stated, e.g., in \cite[Proposition 6.2]{DM2}, which applies to any $V \in \pot_1(\kappa)$; similarly, far from the classical region (say, where $V(x) \geq 4E_n(V)$), a much better (exponentially decaying) bound is known to hold, again for arbitrary $V \in \pot_1(\kappa)$ (see, e.g., \cite[Theorem 7.7]{DM2}).  As anticipated in the introduction, the relevance of the above bounds is therefore their validity in the transition region, where $V(x) \simeq E_n(V)$.

We also point out that the bound for $\psi_n$ in Theorem \ref{thm:pointwise_est}\ref{en:pointwise_est_c3} matches the one obtained in \cite[Proposition 3.4]{dallara_martini} under the additional assumption $V(x) \simeq |x|^d$ for some $d>1$. The method used in \cite{dallara_martini} is based on a theorem by Olver \cite{olverbook}, which essentially allows one to approximate $\psi_n$ with a suitably rescaled Airy function, so the bounds for $\psi_n$ can be reduced to known bounds for the Airy function. The method presented here,
instead, does not go through such an approximation, but yields the desired bounds directly. Moreover it
allows us to treat potentials $V(x) \simeq |x|^d$ for arbitrary $d>0$, or even potentials that are not comparable to a single power law (provided they belong to one of the classes of potentials defined above); see also the discussion in the introduction of \cite{DM2}.

Here and in the following sections, we shall write $E_n$ and $\psi_n$ in place of $E_n(V)$ and $\psi_n(\cdot;V)$ when the potential $V$ is clear from the context.

\subsection{Pointwise estimate in the classical region: \texorpdfstring{$C^1$}{C1} potentials}\label{ss:c1_estimate}

Here we assume that $V \in \pot_1(\kappa)$ and prove the pointwise estimate
\begin{equation}\label{eq:eigen_pointwise_weak}
\psi_n^2 + \frac{(\psi_n')^2}{E_n-V} \lesssim_{\kappa} \frac{E_n (E_n-V)^{-1}}{|\{V \leq E_n\}|} 
\end{equation}
in the classical region $\{V < E_n \}$.

The key to the proof is the monotonicity information provided by the following
elementary identity,
 valid on $\RR \setminus \{0\}$:
\begin{equation}\label{eq:monot_num}
\left( (E_n-V) \psi_n^2 + (\psi_n')^2 \right)' = - V' \psi_n^2
\end{equation}
(cf.\ \cite[Proposition 5.3]{DM2}). As the right-hand side is positive on $(-\infty,0)$ and negative on $(0,\infty)$, we conclude that
\begin{equation}\label{eq:monot_zero}
(E_n-V) \psi_n^2 + (\psi_n')^2 \leq E_n \psi_n(0)^2 + \psi_n'(0)^2
\end{equation}
on the whole $\RR$.

To bound the right-hand side of the latter, we use another monotonicity argument, based on the following counterpart to \eqref{eq:monot_num}:
\begin{equation}\label{eq:monot_den}
\left( \psi_n^2 + \frac{(\psi_n')^2}{E_n-V} \right)' = \frac{V'}{(E_n-V)^2}  (\psi_n')^2
\end{equation}
on the region $\{ V \neq E_n \} \setminus \{0\}$ (cf.\ \cite[\S8.3]{titchmarsh}). As the right-hand side is negative on $(-\infty,0)$ and positive on $(0,\infty)$, we can control (cf.\ \cite[Section 6.4]{DM2}) the value of $\psi_n^2 + \frac{(\psi_n')^2}{E_n-V}$ at $0$ with its average on $\{V \leq E_n/2\}$, thus obtaining that
\begin{equation}\label{eq:zero_average}
\psi_n(0)^2 + \frac{\psi_n'(0)^2}{E_n} \leq \frac{1}{|\{V \leq E_n/2\}|} \left( \|\psi_n\|_2^2 + \frac{2}{E_n} \|\psi_n'\|_2^2 \right) \lesssim_{\kappa} \frac{1}{|\{V \leq E_n\}|}
\end{equation}
by Proposition \ref{prp:BS_rough}.

The desired estimate \eqref{eq:eigen_pointwise_weak} then follows by combining \eqref{eq:monot_zero} and \eqref{eq:zero_average}.

We record here an elementary consequence of \eqref{eq:eigen_pointwise_weak}, that is, a uniform estimate which is valid well within the classical region: namely, for any $\theta \in (0,1)$,
\begin{equation}\label{eq:uniform_wellwithin}
\psi_n^2 + \frac{(\psi_n')^2}{E_n-V} \lesssim_{\kappa,\theta} \frac{1}{|\{V \leq E_n\}|} \qquad\text{on } \{V \leq (1-\theta) E_n\}. 
\end{equation}

\subsection{Pointwise estimate in the classical region: \texorpdfstring{$C^3$}{C3} potentials}\label{ss:sonin}

In this section we assume that $V \in \pot_3(\kappa)$, and prove the following ``improvement'' of \eqref{eq:eigen_pointwise_weak}:
\begin{equation}\label{eq:est_eigen_pointwise}
\psi_n^2 + \left(1 + \frac{n^{-2/3} E_n}{E_n-V}\right)^{-3} \frac{(\psi_n')^2}{E_n-V} \lesssim_{\kappa} \frac{E_n^{1/2} (E_n-V)^{-1/2}}{|\{V \leq E_n\}|} 
\end{equation}
in the classical region $\{V < E_n\}$.

As in Section \ref{ss:c1_estimate} above, the proof is based on a monotonicity argument. Specifically, the method used here is inspired by what is referred to as the ``Sonin's function'' method in \cite{krasikov}. The main idea is to consider the function
\[
f_n \defeq (E_n - V)^{1/4} \psi_n,
\]
which is well defined and $C^2$ in the punctured classical region $\{ V < E_n\} \setminus \{0\}$. From the differential equation
\begin{equation}\label{eq:ode}
-\psi_n'' + V \psi_n = E_n \psi_n
\end{equation}
satisfied by $\psi_n$, one readily obtains that
\begin{equation}\label{eq:new_ode}
f_n'' - 2A_n f_n' + B_n f_n = 0
\end{equation}
in the punctured classical region, where
\[
A_n = - \frac{1}{4} \frac{V'}{E_n-V}, \qquad B_n = E_n-V + \frac{5}{16} \frac{(V')^2}{(E_n-V)^2}+ \frac{1}{4} \frac{V''}{E_n-V}.
\]
We now consider the ``Sonin's function'' for $f_n$, namely,
\[
S_n = f_n^2 + \frac{(f_n')^2}{B_n},
\]
which is defined and $C^1$ in the subset $\{ V < E_n, \, B_n \neq 0\}$ of the punctured classical region.
A quick computation shows that
\begin{equation}\label{eq:sonin_derivative}
S_n' = \frac{4A_n B_n-B_n'}{B_n^2} (f_n')^2,
\end{equation}
that is, the derivative of $S_n$ has the same sign as
\[4A_n B_n - B_n' 
= - \frac{15}{16} \frac{(V')^3}{(E_n-V)^3} - \frac{9}{8} \frac{V' V''}{(E_n-V)^2} - \frac{1}{4} \frac{V'''}{E_n-V}.
\]

To study the sign of $B_n$ and $4A_n B_n - B_n'$, we rewrite them as
\begin{align*}
B_n &= E_n-V + \frac{5}{16} \frac{(V')^2}{(E_n-V)^2} \left[ 1 + \frac{4}{5} \frac{V''}{(V')^2} (E_n-V) \right],\\
4A_n B_n - B_n' &= - \frac{15}{16} \frac{(V')^3}{(E_n-V)^3} \left[ 1 + \frac{6}{5} \frac{V''}{(V')^2} (E_n-V) + \frac{4}{15} \frac{V'''}{(V')^3} (E_n-V)^2 \right] .
\end{align*}
This is convenient because, for $V \in \pot_3(\kappa)$,
\begin{equation}\label{eq:pot_trider_estimate}
|V''/(V')^2| \lesssim_{\kappa} 1/V, \qquad |V'''/(V')^3| \lesssim_{\kappa} 1/V^2.
\end{equation}
Hence, we can choose $\epsilon = \epsilon(\kappa) > 0$ sufficiently small to guarantee that
\begin{equation}\label{eq:bn_est}
 B_n \simeq E_n-V + \frac{(V')^2}{(E_n-V)^2}, \qquad 4A_n B_n - B_n' \simeq - \frac{(V')^3}{(E_n-V)^3}
\end{equation}
in the region $\Omega_n = \{ (1-\epsilon) E_n \leq V < E_n \}$. In particular, $B_n>0$ there, and $S_n'$ has the same sign as $-V'$.

Define now $y_n^\pm > 0$ as the points such that $V(\pm y_n^\pm) = (1-\epsilon) E_n$. Then, using the fact that $S_n$ is decreasing on $\Omega_n \cap (0,\infty)$ and increasing on $\Omega_n \cap (-\infty,0)$, we conclude that
\begin{equation}\label{eq:sonin_est}
f_n^2 + \frac{(f_n')^2}{B_n} \leq \max_{\pm} \left( f_n(\pm y_n^\pm)^2 + \frac{f_n'(\pm y_\pm)^2}{B_n(\pm y_n^\pm)} \right) \qquad\text{on $\Omega_n$}.
\end{equation}
Note now that
\[
f_n = (E_n-V)^{1/4} \psi_n, \qquad f_n' = (E_n-V)^{1/4}  \left[ \psi_n'-\frac{1}{4} \frac{V'}{E_n-V} \psi_n \right] .
\]
In particular, by \eqref{eq:bn_est},
\begin{equation}\label{eq:fpsiequiv}
f_n^2 +\frac{(f_n')^2}{B_n} \simeq (E_n-V)^{1/2} \left[ \psi_n^2  + \frac{(\psi_n')^2}{B_n}\right]
\end{equation}
in the region $\Omega_n$, and
\[
f_n(\pm y_n^\pm)^2 +\frac{f_n'(y_n^\pm)^2}{B_n(y_n^\pm)} \lesssim (\epsilon E_n)^{1/2} \left[ \psi_n(\pm y_n^\pm)^2 + \frac{\psi_n'(\pm y_n^\pm)^2}{\epsilon E_n} \right] 
\lesssim_{\kappa} \frac{E_n^{1/2}}{|\{V \leq E_n\}|};
\]
the last inequality is consequence of the fact that $\pm y_n^\pm \in \{ V \leq (1-\epsilon) E_n\}$, that is, $\pm y_n^\pm$ are well inside the classical region, so the uniform estimate \eqref{eq:uniform_wellwithin}
applies. From \eqref{eq:sonin_est} and \eqref{eq:fpsiequiv} we then deduce that
\begin{equation}\label{eq:est_eigen_pointwise_var}
\psi_n^2 + \frac{(\psi_n')^2}{B_n} \lesssim_{\kappa} \frac{E_n^{1/2} (E_n-V)^{-1/2}}{|\{V \leq E_n\}|} \qquad\text{on } \Omega_n.
\end{equation}

As $|V'| \simeq_\kappa |\{ V \leq E_n \}|^{-1} E_n \simeq_\kappa n^{-1} E_n^{3/2}$ on $\Omega_n$ by Proposition \ref{prp:BS_rough},
from \eqref{eq:bn_est} we deduce that
\[
B_n \simeq E_n-V + \frac{(V')^2}{(E_n-V)^2} \simeq_\kappa (E_n-V) \left(1+ \frac{n^{-2/3} E_n}{E_n-V}\right)^3,
\]
so from \eqref{eq:est_eigen_pointwise_var} the desired estimate \eqref{eq:est_eigen_pointwise} follows on $\Omega_n$. Actually the same estimate \eqref{eq:est_eigen_pointwise} holds on the whole classical region $\{ V < E_n \}$, because on $\{ V \leq (1-\epsilon) E_n\}$ it simply reduces to the uniform estimate \eqref{eq:uniform_wellwithin}.

\subsection{Pointwise estimate in the classical region: convex potentials and potentials with uniformly continuous derivative}\label{ss:sonin2}

A variation of the method presented in the previous section allows us to show that, if $V \in \pot_{1,\uc}(\kappa,\omega)$, then the following pointwise estimate holds for all $\alpha \in (1/4,1/2)$:
\begin{equation}\label{eq:est_eigen_pointwise_new}
\psi_n^2 + \left(1 + \frac{n^{-2/3} E_n}{E_n-V}\right)^{-3} \frac{(\psi_n')^2}{E_n-V} \lesssim_{\bar\kappa,\alpha} \frac{E_n^{2\alpha} (E_n-V)^{-2\alpha}}{|\{V \leq E_n\}|}
\end{equation}
in the classical region $\{V < E_n\}$, where $\bar\kappa = (\kappa,\omega)$; 
in addition, we will prove the analogous estimate for $\alpha=1/4$ and $\bar\kappa = \kappa$ in the case $V \in \pot_{1,\cv}(\kappa)$.

As in Section \ref{ss:sonin}, we apply the Sonin's function method. We only discuss the estimates for $x \geq 0$, as the case $x \leq 0$ can be be treated analogously. Let $x_n \in \Rpos$ denote the positive transition point, i.e., $V(x_n) = E_n$.

Let $\alpha \in \Rpos$, and consider the function
\[
f_n(x) \defeq (x_n - x)^{\alpha} \psi_n(x),
\]
which is well defined and $C^2$ in the classical region. From the differential equation \eqref{eq:ode}
satisfied by $\psi_n$, one readily obtains that $f_n$ satisfies \eqref{eq:new_ode}
in the classical region, where
\begin{equation}\label{eq:new_an_bn}
A_n = - \alpha (x_n-x)^{-1}, \qquad B_n = E_n-V + \alpha(\alpha+1) (x_n-x)^{-2} .
\end{equation}
Since $\alpha>0$, clearly $B_n > 0$ in the classical region.
We now consider the Sonin's function for $f_n$, namely,
\[
S_n = f_n^2 + \frac{(f_n')^2}{B_n},
\]
which is defined and $C^1$ in the punctured classical region.
By arguing as in \eqref{eq:sonin_derivative}, 
we deduce that
the derivative of $S_n$ has the same sign as
\[4A_n B_n - B_n' 
= V'-4\alpha\frac{E_n-V}{x_n-x} - 2\alpha(\alpha+1)(2\alpha+1) (x_n-x)^{-3}.
\]

To study the sign of $4A_n B_n - B_n'$, we observe that, by Lagrange's Mean Value Theorem,
\begin{equation}\label{eq:monotonicity_alpha}
V'(x)-4\alpha\frac{E_n-V(x)}{x_n-x} = V'(x) - 4\alpha V'(\xi) = -V'(x) (4\alpha V'(\xi)/V'(x) - 1)
\end{equation}
for some $\xi \in (x,x_n)$.

Now, if $V \in \pot_{1,\uc}(\kappa,\omega)$, then
\[
\left|\log (V'(\xi)/V'(x)) \right| \leq \omega( \log(\xi/x) )
\]
and therefore
\[
4\alpha V'(\xi)/V'(x) = \exp(\log(4\alpha) + \log(V'(\xi)/V'(x))) \geq \exp(\log(4\alpha) - \omega(\log(\xi/x))).
\]
If we take $\alpha \in (1/4,1/2)$, then $\log(4\alpha)>0$. Since $\lim_{t \to 0} \omega(t) = 0$, we can find $\delta = \delta(\alpha,\omega)>0$ such that
\[
\omega(t) \leq \log(4\alpha) \text{ whenever } 0 < t \leq \delta.
\]
Consequently, whenever $0 < x_n/x \leq e^\delta$, we have $4\alpha V'(\xi)-V'(x) \geq 0$, and therefore $4A_n B_n -B_n' < 0$. As a consequence, $S_n$ is decreasing in the interval $[e^{-\delta} x_n,x_n)$.

If instead $V \in \pot_{1,\cv}(\kappa)$, then an even simpler argument applies. Indeed, one can go back to \eqref{eq:monotonicity_alpha}, and observe that
$V'(\xi)/V'(x) \geq 1$ in this case, because $V'$ is increasing; consequently we obtain that $4A_n B_n - B'_n < 0$ on the whole $\Rpos \cap \{ V < E_n\}$ whenever $\alpha \geq 1/4$. In particular, we can take $\alpha=1/4$ and $\delta=1$ in this case, and again conclude that $S_n$ is decreasing in the interval $[e^{-\delta} x_n,x_n)$.

The fact that $S_n$ is decreasing on $[e^{-\delta} x_n,x_n)$ implies that
\begin{equation}\label{eq:sonin_est_new}
f_n^2 + \frac{(f_n')^2}{B_n} \leq  f_n(e^{-\delta} x_n)^2 + \frac{f_n'(e^{-\delta} x_n)^2}{B_n(e^{-\delta} x_n)}  \qquad\text{on $[e^{-\delta}x_n,x_n)$}.
\end{equation}
Note now that
\[
f_n = (x_n-x)^{\alpha} \psi_n, \qquad f_n' = (x_n-x)^{\alpha}  \left[ \psi_n'-\alpha (x_n-x)^{-1} \psi_n \right] .
\]
In particular, by \eqref{eq:new_an_bn},
\begin{equation}\label{eq:fpsiequiv_new}
f_n^2 +\frac{(f_n')^2}{B_n} \simeq_\alpha (x_n-x)^{2\alpha} \left[ \psi_n^2  + \frac{(\psi_n')^2}{B_n}\right]
\end{equation}
on $[e^{-\delta}x_n,x_n)$, and
\[\begin{split}
f_n(e^{-\delta} x_n)^2 +\frac{f_n'(e^{-\delta} x_n)^2}{B_n(e^{-\delta} x_n)} &\lesssim_{\bar\kappa,\alpha} x_n^{2\alpha} \left[ \psi_n(e^{-\delta} x_n)^2 + \frac{\psi_n'(e^{-\delta} x_n)^2}{E_n} \right] \\
&\lesssim_{\bar\kappa,\alpha} \frac{x_n^{2\alpha}}{|\{V \leq E_n\}|};
\end{split}\]
these inequalities are consequence of the fact that $e^{-\delta} x_n$ is well within the classical region (see Lemma \ref{lem:halfpot}\ref{en:halfpot_doubling}), so $E_n - V(e^{-\delta} x_n) \simeq_{\bar\kappa,\alpha} E_n$ and the uniform estimate \eqref{eq:uniform_wellwithin}
applies. From \eqref{eq:sonin_est_new} and \eqref{eq:fpsiequiv_new} we then deduce that
\begin{equation}\label{eq:est_eigen_pointwise_var_new}
\psi_n^2 +\frac{(\psi_n')^2}{B_n} \lesssim_{\bar\kappa,\alpha} \frac{x_n^{2\alpha} (x_n-x)^{-2\alpha}}{|\{V \leq E_n\}|} \quad\text{on } [e^{-\delta} x_n,x_n).
\end{equation}

We now observe that, by Proposition \ref{prp:BS_rough},
\[
x_n \simeq_{\kappa} |\{ V \leq E_n\}| 
\]
and, by Lemma \ref{lem:halfpot},
\[
E_n - V(x) \simeq_{\kappa} \frac{E_n}{|\{V \leq E_n\}|} (x_n-x) 
\]
on $[e^{-\delta} x_n,x_n)$; hence, from \eqref{eq:new_an_bn} and Proposition \ref{prp:BS_rough} we deduce that
\[
B_n \simeq_{\kappa} (E_n - V) \left(1 + \frac{n^{-2/3} E_n}{ (E_n-V)^2} \right)^3
\]
on $[e^{-\delta} x_n,x_n)$.
As a consequence, the estimate \eqref{eq:est_eigen_pointwise_var_new} gives \eqref{eq:est_eigen_pointwise_new} on $[e^{-\delta}x_n,x_n)$. On the other hand, the interval $[0,e^{-\delta} x_n)$ is well within the classical region, so on that interval the estimate \eqref{eq:est_eigen_pointwise_new} follows from the uniform estimate \eqref{eq:uniform_wellwithin}. In conclusion, \eqref{eq:est_eigen_pointwise_new} is proved on $[0,x_n) = \{ V < E_n \} \cap [0,\infty)$, as desired.

\subsection{Zeros and local extrema of eigenfunctions and their derivatives}\label{ss:location}

In this section we prove the estimates for $\psi_n'$ of Theorem \ref{thm:pointwise_est} on the whole $\RR$, as well as the corresponding estimates for $\psi_n$ within the classical region. These estimates will be derived from those proved in the previous sections, combined with information on the location of the extremum points of $\psi_n$ and $\psi_n'$.

Assume at first that $V \in \pot_1(\kappa)$. We recall a few basic facts about zeros and local extrema of $\psi_n$ and $\psi_n'$, which are easy consequences of the fact that $\psi_n$ is a square-integrable solution of \eqref{eq:ode}.
\begin{enumerate}[label=(\alph*)]
\item $\psi_n$ and $\psi_n'$ do not vanish simultaneously at any point.
\item\label{en:monot_outside} The zeros of $\psi_n$ and $\psi_n'$ are contained in the classical region $\{V < E_n\}$; outside of the classical region, $x \psi_n(x) \psi_n'(x) < 0$, which implies that both $\psi_n^2$ and $(\psi_n')^2$ are strictly increasing on $\{V \geq E_n\} \cap (-\infty,0)$ and strictly decreasing on $\{V \geq E_n\} \cap (0,\infty)$.
\item $\psi_n$ has $n-1$ zeros in the classical region, which are all simple, so $\psi_n$ changes sign at each zero.
\item The zeros of $\psi_n''$ are the zeros of $\psi_n$ and the two transition points (i.e., the points where $V_n = E$); these are the inflexion points of $\psi_n$.
\item Between two consecutive zeros of $\psi_n''$, the function $\psi_n$ is strictly concave or strictly convex according to whether $\psi_n$ is positive or negative.
\item Between two consecutive zeros of $\psi_n''$, there is exactly one zero of $\psi_n'$; these are all the zeros of $\psi_n'$, which has $n$ zeros, and they are all simple.
\item Similarly, between two consecutive zeros of $\psi_n'$, there is exactly one zero of $\psi_n$.
\item The zeros of $\psi_n'$ are the local maximum/minimum points of $\psi_n$, that is, the local maximum points of $\psi_n^2$.
\item Similarly, the zeros of $\psi_n''$ are the local maximum/minimum points of $\psi'$, that is, the local maximum points of $(\psi_n')^2$.
\end{enumerate}

Since $|\psi'|$ attains its maximum in the classical region $\{ V<E_n\}$, from \eqref{eq:eigen_pointwise_weak} we derive the following uniform estimate for $\psi_n'$:
\begin{equation}\label{eq:uniform_der}
\| \psi_n' \|_\infty = \sup_{\{ V < E_n \} } |\psi_n'| \lesssim_\kappa \frac{E_n^{1/2}}{|\{V \leq E_n\}|^{1/2}}.
\end{equation}
This proves the estimate for $\psi_n'$ in Theorem \ref{thm:pointwise_est}\ref{en:pointwise_est_weak}, and moreover implies a rough uniform estimate for $\psi_n$:
\begin{equation}\label{eq:uniform_psi_rough}
\| \psi_n \|_\infty = \sup_{\{ V < E_n \} } |\psi_n| \leq |\psi_n(0)| + \int_{\{V < E_n \}} |\psi_n'| \lesssim_\kappa \frac{n}{|\{V \leq E_n\}|^{1/2}},
\end{equation}
where \eqref{eq:zero_average} and Proposition \ref{prp:BS_rough} were used. Notice that the estimate \eqref{eq:uniform_psi_rough} is worse than the uniform estimate for $\psi_n$ in Theorem \ref{thm:pointwise_est}\ref{en:pointwise_est_weak}; to prove the latter, a more careful analysis of the local extrema of $\psi_n$ and $\psi_n'$ is needed.

Further important information about local extrema is deduced from the monotonicity identities \eqref{eq:monot_den} and \eqref{eq:monot_num}. These identities give us precise information on the sign of the derivatives of the two functions $(E_n-V) \psi_n^2 + (\psi_n')^2$ and $\psi_n^2 + \frac{(\psi_n')^2}{E_n-V}$; by evaluation at the zeros of $\psi_n'' = (V-E_n) \psi_n$ and $\psi_n'$, these yield the following information.
\begin{enumerate}[label=(\alph*),resume]
\item The local maxima of $\psi_n^2$ (that is, the values of $\psi_n^2$ at the zeros of $\psi_n'$) on $[0,\infty)$ form a strictly increasing sequence, while on $(-\infty,0]$ they form a strictly decreasing sequence.
\item\label{en:monot_dermax} The local maxima of $(\psi_n')^2$ (that is, the values of $(\psi_n')^2$ at the zeros of $\psi_n''$) on $[0,\infty)$ form a strictly decreasing sequence, while on $(-\infty,0]$ they form a strictly increasing sequence.
\end{enumerate}
In particular, the global maximum of $\psi_n^2$ is attained at an \emph{outermost} zero of $\psi_n'$ (that is, a zero closest to one of the two transition points). Similarly, for $n>1$, the global maximum of $(\psi_n')^2$ is attained at an \emph{innermost} zero of $\psi_n$ (that is, the origin if $\psi_n(0)=0$, or the positive and negative zeros of $\psi_n$ that are closest to the origin if $\psi_n(0) \neq 0$). For this reason, it is useful to investigate the location of the zeros of $\psi_n$ and $\psi_n'$ within the classical region.

To this purpose, as in \cite[\S 6.31]{szego}, we can fruitfully use Sturm's comparison theorem. Namely, for any $\tilde E \in (0,E_n)$, we have that $V-E_n \leq \tilde E - E_n$ on $\{V \leq \tilde E\}$. Hence, we can find a zero of $\psi_n$ between any two zeros of a nontrivial solution of $-u'' + (\tilde E-E_n) u = 0$ on $\{V \leq \tilde E\}$; in other words, we have proved the following.
\begin{enumerate}[label=(\alph*),resume]
\item For all $\tilde E<E_n$, there is a zero of $\psi_n$ in any interval of length $\pi/\sqrt{E_n-\tilde E}$ fully contained in $\{V \leq \tilde E\}$.
\end{enumerate}
In order to be able to apply this result, we need to ensure that such an interval exists, that is, we need to choose $\tilde E$ so that
\[
 |\{V \leq \tilde E\}| \sqrt{ E_n-\tilde E} \geq \pi.
\]

We now observe that, by Proposition \ref{prp:BS_rough},
\begin{gather*}
\left(\frac{E_n}{n^{2/3}}\right)^{1/2} |\{ V \leq E_n/2 \}|  \simeq_{\kappa} n^{2/3}.
\end{gather*}
This means that there exists $n_0 = n_0(\kappa) \in \Npos$ sufficiently large that
\[
\frac{E_n}{n^{2/3}} \leq \frac{ E_n }{2}, \quad \left( \frac{E_n}{n^{2/3}} \right)^{1/2} |\{ V \leq E_n/2 \}| \geq  \pi \quad\text{ for all } n \geq n_0.
\]
Consequently, if we take $\tilde E = E_n - \frac{E_n}{n^{2/3}} $, then
\[
|\{V \leq \tilde E\}| \sqrt{E_n-\tilde E} \geq  \left( \frac{E_n}{n^{2/3}} \right)^{1/2} |\{ V \leq E_n/2 \}| \geq \pi \quad\text{ for all } n \geq n_0,
\]
and the previous result can be applied.

We now observe that, if $x_n^\pm, y_n^\pm \in (0,\infty)$ are such that $V(\pm x_n^\pm) = E_n$ and $V(\pm y_n^\pm) = \tilde E$, then, by Lemma \ref{lem:halfpot} and Proposition \ref{prp:BS_rough},
\[
x_n^\pm - y_n^\pm \simeq_{\kappa} \frac{x_n^\pm}{E_n} (E_n - \tilde E) \simeq_{\kappa} \frac{|\{V \leq E_n\}|}{n^{2/3}} \simeq_{\kappa} \frac{n^{1/3}}{E_n^{1/2}} \simeq \frac{\pi}{\sqrt{E_n-\tilde E}}.
\]

In conclusion, for all $n \geq n_0 = n_0(\kappa)$:
\begin{enumerate}[label=(\alph*),resume]
\item\label{en:dist_zero_tran} There are $\gtrsim_{\kappa} n^{2/3}$  zeros of $\psi_n$ within the region $\{ V \leq E_n - E_n/n^{2/3} \}$, and the outermost of them have distance $\simeq_{\kappa} n^{-2/3} |\{V \leq E_n\}|$ from the transition point with the same sign.
\item\label{en:dist_zero_zero} Any two consecutive zeros of $\psi_n$ within the classical region have distance $\lesssim_{\kappa} n^{-2/3} |\{V \leq E_n\}|$
\end{enumerate}

By using the above information, we can improve, for all $V \in \pot_1(\kappa)$, the uniform estimate \eqref{eq:uniform_psi_rough}. 
Indeed, we know that the maximum of $|\psi_n|$ is attained at one of the outermost zeros of $\psi_n'$; let us call this point $w_n$, and let $\zeta_n$ be the outermost zero of $\psi_n$ with the same sign. From the above discussion, we deduce that, for all $n \geq n_0$, $|\zeta_n-w_n| \lesssim_{\kappa} n^{-2/3} |\{V \leq E_n\}| \simeq_\kappa n^{1/3}/ E_n^{1/2}$, hence, by \eqref{eq:uniform_der},
\begin{equation}\label{eq:uniform_psi_weak}
\|\psi_n\|_\infty \leq \left| \int_{\zeta_n}^{w_n} \psi_n' \right| \leq |\zeta_n - w_n| \|\psi_n'\|_\infty \lesssim_{\kappa} \frac{n^{1/3}}{|\{V \leq E_n\}|^{1/2}} .
\end{equation}
The same estimate for $n < n_0$ is already contained in \eqref{eq:uniform_psi_rough}, since $n \simeq_{\kappa} n^{1/3}$ for $n < n_0$. Combining this estimate with \eqref{eq:eigen_pointwise_weak} proves the validity of the estimate for $\psi_n$ of Theorem \ref{thm:pointwise_est}\ref{en:pointwise_est_weak} within the classical region.

The above information on the location of the zeros and extrema of $\psi_n$ and $\psi_n'$ can also be used to prove the estimates for $\psi_n'$ of parts \ref{en:pointwise_est_new} to \ref{en:pointwise_est_c3} of Theorem \ref{thm:pointwise_est} on the whole real line, as well as the corresponding estimates for $\psi_n$ in the classical region. Indeed, under the assumptions on $V$ and $\alpha$ in any of parts \ref{en:pointwise_est_new} to \ref{en:pointwise_est_c3} of Theorem \ref{thm:pointwise_est}, we know from \eqref{eq:est_eigen_pointwise} and \eqref{eq:est_eigen_pointwise_new} that the improved pointwise estimate
\begin{equation}\label{eq:est_eigen_pointwise_joint}
\psi_n^2 + \left(1 + \frac{n^{-2/3} E_n}{E_n-V}\right)^{-3} \frac{(\psi_n')^2}{E_n-V} \lesssim_{\bar\kappa,\alpha} \frac{E_n^{2\alpha} (E_n-V)^{-2\alpha}}{|\{V \leq E_n\}|}
\end{equation}
holds in the classical region $\{V < E_n\}$. In particular,
\begin{equation}\label{eq:est_eigen_pointwise_joint_der}
(\psi_n')^2 \lesssim_{\bar\kappa,\alpha} \frac{E_n^{2\alpha} (E_n-V)^{1-2\alpha}}{|\{V \leq E_n\}|} \qquad\text{on } \{ V \leq E_n - E_n/n^{2/3} \}.
\end{equation}
If we now apply this estimate 
at the two outermost zeros $\pm z_n^\pm$ of $\psi_n$ within the region $\{ V \leq E_n - E_n/n^{2/3} \}$, we obtain that, for all $n \geq n_0$,
\[
\psi_n'(\pm z_n^\pm)^2 \lesssim_{\bar\kappa,\alpha} \frac{E_n^{2\alpha} (E_n-V(\pm z_n^\pm))^{1-2\alpha}}{|\{V \leq E_n\}|} \simeq_{\kappa} \frac{E_n}{|\{V \leq E_n\}|} n^{-2(1-2\alpha)/3},
\]
where we used that $E_n-V(\pm z_n^\pm) \simeq_{\kappa} n^{-2/3} E_n$, due to the fact that the distance between $\pm z_n^\pm$ and the transition point of the same sign is $\simeq_{\kappa} n^{-2/3} |\{V \leq E_n\}|$ (see \ref{en:dist_zero_tran} and \ref{en:dist_zero_zero} above).

As previously discussed, the $\pm z_n^\pm$ are local maximum points of $(\psi_n')^2$, and because of the monotonicity properties of the sequence of local maxima of $(\psi_n')^2$ (see \ref{en:monot_dermax} above), we conclude that, for all $n \geq n_0$,
\begin{equation}\label{eq:est_der_outside}
(\psi_n')^2 \leq \max_{\pm} \psi_n'(\pm z_n^\pm)^2 \lesssim_{\bar\kappa,\alpha} \frac{E_n}{|\{V \leq E_n\}|} n^{-2(1-2\alpha)/3} \quad\text{on } \RR \setminus (-z_n^-,z_n^+);
\end{equation}
here we can go beyond the classical region, because $(\psi_n')^2$ is increasing on $\{V \geq E_n\} \cap (-\infty,0)$ and decreasing on $\{V \geq E_n\} \cap (0,\infty)$ (see \ref{en:monot_outside} above). By combining \eqref{eq:est_eigen_pointwise_joint_der} and \eqref{eq:est_der_outside} we deduce the estimates for $\psi_n'$ of Theorem \ref{thm:pointwise_est}\ref{en:pointwise_est_new}-\ref{en:pointwise_est_c3} on the whole $\RR$. 

As for the bound on $\psi_n$, we can argue as in \eqref{eq:uniform_psi_weak}, but use the improved bound on $\psi_n'$ from \eqref{eq:est_der_outside}. Namely, let $w_n$ be an outermost zero of $\psi_n'$ where $|\psi_n|$ attains its maximum, and $\zeta_n$ be the outermost zero of $\psi_n$ with the same sign. Then
$\zeta_n,w_n \in \RR \setminus (-z_n^-,z_n^+)$ and $|z_n-w_n| \lesssim_{\kappa} n^{-2/3} |\{V \leq E_n\}| \simeq_\kappa n^{1/3}/E_n^{1/2}$, so
\begin{equation}\label{eq:uniform_psi}
\|\psi_n\|_\infty = |\psi_n(w_n)| \leq \left|\int_{\zeta_n}^{w_n} \psi_n'\right| \lesssim_{\kappa} \frac{n^{2\alpha/3}}{|\{V \leq E_n\}|^{1/2}}
\end{equation}
for all $n \geq n_0$. The same estimate for $n < n_0$ is already contained in \eqref{eq:uniform_psi_rough}, since $n \simeq_{\kappa,\alpha} n^{2\alpha/3}$ for $n < n_0$. Combining this estimate with \eqref{eq:est_eigen_pointwise_joint} proves the validity of the estimates for $\psi_n$ of Theorem \ref{thm:pointwise_est}\ref{en:pointwise_est_new}-\ref{en:pointwise_est_c3} within the classical region.

\subsection{Estimate outside the classical region}\label{ss:exponential}
In order to complete the proof of Theorem \ref{thm:pointwise_est},
it remains to prove
the estimate
\[
|\psi_n(x)| \lesssim_{\bar\kappa,\alpha} \frac{1}{|\{V \leq E_n\}|^{1/2}} (V(x)/E_n-1)^{-\alpha}  
\]
outside the classical region. 
We only discuss the case $x>0$, since the case $x<0$ is treated analogously. So we need to prove the above estimates for $x>x_n^+$, where $x_n^+$ is the positive transition point.

Here we use the estimate from \cite[\S 8.2]{titchmarsh},
\[
|\psi_n(x)| \leq |\psi_n(x_n^+)| \exp\left(-\int_{x_n^+}^x (V-E_n)^{1/2}\right) \leq \|\psi_n\|_\infty \exp\left(-\int_{x_n^+}^x (V-E_n)^{1/2}\right) ,
\]
valid for all $x \geq x_n^+$, together with the estimates for $\|\psi_n\|_\infty$ obtained previously.

Let $\tilde x_n^+ > 0$ be such that $V(\tilde x_n^+) = 4E_n$. Then, for $x \geq \tilde x_n^+$,
\[
\int_{x_n^+}^x (V-E_n)^{1/2} \simeq_{\kappa} x \sqrt{V(x)} \geq x_n^+ E_n^{1/2} \sqrt{V(x)/E_n} \simeq_{\kappa} n \sqrt{V(x)/E_n}
\]
(see \cite[eq.\ (6.11)]{DM2} and Proposition \ref{prp:BS_rough}) and both factors in the last product are greater than or equal to $1$. Hence, for some $c=c(\kappa)$, if we use the estimate for $\|\psi_n\|_\infty$ from \eqref{eq:uniform_psi_rough}, then we deduce that
\[\begin{split}
|\psi_n(x)| &\lesssim_{\kappa} \frac{n}{|\{V \leq E_n\}|^{1/2}} \exp(-cn) \exp(-c \sqrt{V(x)/E_n}) \\
&\lesssim_{\kappa,N} \frac{1}{|\{V \leq E_n\}|^{1/2}} (V(x)/E_n)^{-N}  \simeq_N \frac{1}{|\{V \leq E_n\}|^{1/2}} (V(x)/E_n-1)^{-N},
\end{split}\]
for any $N > 0$, since $V(x)/E_n \geq 4$ for $x \geq \tilde x_n^+$.

For $x \in (x_n^+,\tilde x_n^+)$, instead,
\[\begin{split}
\int_{x_n^+}^x (V-E_n)^{1/2} &\simeq_{\kappa} \frac{|\{V \leq E_n\}|}{E_n} \int_{x_n^+}^x (V-E_n)^{1/2} V' \\
&\simeq_{\kappa} \frac{|\{V \leq E_n\}|}{E_n} (V(x)-E_n)^{3/2} \\
&\simeq_{\kappa} n\,  (V(x)/E_n-1)^{3/2},
\end{split}\]
by Proposition \ref{prp:BS_rough}. So, if we use the estimate for $\|\psi_n\|_\infty$ from \eqref{eq:uniform_psi_weak} and \eqref{eq:uniform_psi}, then
\[\begin{split}
|\psi_n(x)| &\lesssim_{\bar\kappa,\alpha}  \frac{n^{2\alpha/3}}{|\{V \leq E_n\}|^{1/2}} \exp\left(-c n \, (V(x)/E_n-1)^{3/2}\right) \\
&\lesssim_\alpha \frac{1}{|\{V \leq E_n\}|^{1/2}} (V(x)/E_n-1)^{-\alpha},  
\end{split}\]
as desired.

\section{Proof of the sharpened weighted Plancherel estimate}\label{s:weightedplancherel}

We are finally in a position to prove the desired sharpened version of the weighted Plancherel estimate of \cite[Theorem 9.1]{DM2}. We restate it as a separate theorem.

\begin{thm}\label{thm:weightedplancherel_l2}
Assume that $V \in \pot_{1+\theta}(\kappa)$ for some $\theta \in (0,1)$.
Let $\mm : \RR \to \CC$ be a bounded Borel function such that $\supp \mm \subseteq [1/4,1]$. 
Then, 
for all $\vartheta \in [0,1/2)$ and all $r>0$,
\begin{multline*}
\esssup_{z' \in \RR^2} \, r^{2-2\vartheta} \max\{V(r),V(x')\}^{1/2-\vartheta} \int_{\RR^2} |y-y'|^{2\vartheta} \left|\Kern_{\mm(r^2 \opL)}(z,z') \right|^2 \,dz \\
\lesssim_{\theta,\kappa,\vartheta} \|\mm\|_{\sobolev{\vartheta}{2}}^2,
\end{multline*}
where $z=(x,y)$ and $z'=(x',y')$.
\end{thm}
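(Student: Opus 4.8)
The plan is to pass, via the Fourier transform in $y$, to a one-dimensional problem, and to estimate the resulting spectral sum by combining the change-of-variables Proposition~\ref{prp:inv_eigenvalue}, the approximate Bohr--Sommerfeld identity of Proposition~\ref{prp:BS_log}, and, as the decisive new ingredient, the transition-region eigenfunction estimates of Theorem~\ref{thm:pointwise_est} (which supply the hypothesis of the variant spectral projector bound of Theorem~\ref{thm:new_estimate}). Applying the partial Fourier transform $\mathcal F_y$, under which $\opL$ becomes the direct integral $\int_\RR^\oplus \opH[\xi^2 V]\,d\xi$, and spectrally decomposing each fibre operator $\opH[\xi^2 V]$, one writes
\[
\Kern_{\mm(r^2\opL)}(z,z') = \frac{1}{2\pi}\int_\RR e^{i(y-y')\xi}\,\mathcal K_\xi(x,x')\,d\xi, \qquad \mathcal K_\xi(x,x') \defeq \sum_{n\geq 1} \mm(r^2 E_n(\xi^2 V))\,\psi_n(x;\xi^2 V)\,\psi_n(x';\xi^2 V).
\]
For fixed $x,x'$ the function $\xi\mapsto\mathcal K_\xi(x,x')$ is the $y$-Fourier transform of $\Kern_{\mm(r^2\opL)}$; since multiplication by $|y-y'|^{2\vartheta}$ corresponds, through Plancherel in $w=y-y'$, to the homogeneous Sobolev norm $\dot{H}^\vartheta$ in the dual variable $\xi$, the assertion is equivalent to
\[
\int_\RR \| \xi\mapsto \mathcal K_\xi(x,x') \|_{\dot{H}^\vartheta(\RR_\xi)}^2 \,dx \lesssim_{\theta,\kappa,\vartheta} \frac{\|\mm\|_{\dot{H}^\vartheta(\RR)}^2}{r^{2-2\vartheta}\,\max\{V(r),V(x')\}^{1/2-\vartheta}}
\]
for a.e.\ $x'$ (the left-hand side depends on $z'$ only through $x'$, by $y$-translation invariance), where one also uses that $\|\mm\|_{\sobolev{\vartheta}{2}}\simeq\|\mm\|_{\dot{H}^\vartheta(\RR)}$ because $\supp\mm\subseteq[1/4,1]$.

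Next I would organise the sum over $n$ by spectral scale. As $\supp\mm\subseteq[1/4,1]$, only indices with $E_n(\xi^2 V)\simeq r^{-2}$ contribute, and for each such $n$ the change of variables $\xi\rightsquigarrow\mu=r^2 E_n(\xi^2 V)$, carried out separately on $\{\xi>0\}$ and $\{\xi<0\}$, is by Proposition~\ref{prp:inv_eigenvalue} a smooth bijection of $\Rpos$ with Jacobian $\simeq_\kappa \xi/\mu$; it transports $\mm(r^2 E_n(\xi^2 V))$ to $\mm(\mu)$ and the coupling constant $\xi^2$ to $\Xi_n(\mu/r^2;V)$. Decomposing dyadically in $A\simeq(r^2\xi^2)^{-1}$ then sorts the surviving indices into precisely the families $\{\,n:\ (\mu/r^2)/\Xi_n(\mu/r^2;V)\in[A,2A]\,\}$ for which Theorem~\ref{thm:new_estimate} (applied with parameter $\mu/r^2$) provides a bound, while the exponential decay estimate~\eqref{eq:exp_decay_eigenfunctions} shows that $\psi_n(x;\xi^2 V)\,\psi_n(x';\xi^2 V)$ is negligible unless $x$ and $x'$ both lie in the corresponding classical region $\{V\lesssim A\}$. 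Performing the $x$-integral over $\{V\lesssim A\}$, summing the geometric series in the dyadic parameter $A$ against the factor $A^{-(1/2-\vartheta)}$ generated by the Jacobians, and tracking both the range of relevant scales and the dependence on $r$ as in \cite[Theorem~9.1]{DM2}, one is reduced to showing that, for each fixed scale $A$ and each $x,x'$ in the associated classical region, the $\dot{H}^\vartheta$-norm in $\mu$ of
\[
\Phi(\mu) \defeq \mm(\mu)\sum_{\substack{n \in \Npos \\ (\mu/r^2)/\Xi_n \in [A,2A]}} \psi_n(x;\Xi_n V)\,\psi_n(x';\Xi_n V), \qquad \Xi_n \defeq \Xi_n(\mu/r^2;V),
\]
is $\lesssim r^{-1}\|\mm\|_{\dot{H}^\vartheta}^2$, uniformly in $x,x',A$.

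For this core estimate I would expand the $\dot{H}^\vartheta$-seminorm of $\Phi$ through its Gagliardo double integral --- equivalently, via a Littlewood--Paley decomposition $\mm=\sum_k\mm_k$ with $\|\mm\|_{\dot{H}^\vartheta}^2\simeq\sum_k 2^{2k\vartheta}\|\mm_k\|_2^2$ --- and split each difference $\Phi(\mu)-\Phi(\mu')$ into a part in which only the scalar factor $\mm$ is differenced and a part in which only the products $\psi_n(x;\Xi_n V)\,\psi_n(x';\Xi_n V)$ are differenced. In the first part, Cauchy--Schwarz in the two eigenfunction factors and Theorem~\ref{thm:new_estimate}, applied separately at $x$ and at $x'$, bound $\sum_n\psi_n(x;\Xi_n V)^2$ and $\sum_n\psi_n(x';\Xi_n V)^2$ by $\lesssim r^{-1}$ on the classical region, and what remains is exactly $\|\mm\|_{\dot{H}^\vartheta}^2$. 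In the second part one invokes the approximate Bohr--Sommerfeld identity of Proposition~\ref{prp:BS_log} --- in the normalisation $t_n=\int_\RR(\mu/r^2-\Xi_n V)_+^{1/2}$ it gives $|t_n-\pi n|\lesssim\log(1+n)$, hence sharp spacing of the relevant indices --- together with the transition-region bounds of Theorem~\ref{thm:pointwise_est}, and in particular the Airy-type gain $|1-\Xi_n V(x)/(\mu/r^2)|^{-\theta}$ with exponent $\theta<1/2$, to control how much each summand of $\Phi$ may vary in $\mu$; the sum over $n$ is then carried out with Lemma~\ref{lem:sum_int}, and only the inequality $\|\mm\|_2\le\|\mm\|_{\sobolev{\vartheta}{2}}$, not $\|\mm\|_\infty$, is needed to absorb the residual factor of $\mm$.

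The main obstacle is precisely this last step. Since $\vartheta<1/2$, the Sobolev space $\sobolev{\vartheta}{2}$ does not embed into $L^\infty$, so the route of \cite[Theorem~9.1]{DM2} --- extract $\sup|\mm|$ and apply the $L^\infty$-type spectral projector bound of \cite[Theorem~8.5]{DM2} --- is unavailable, and one must instead retain the fractional derivative throughout and control the fractional variation in $\mu$ of the weighted spectral sum $\sum_n\psi_n(x;\Xi_n(\mu/r^2;V)V)\,\psi_n(x';\Xi_n(\mu/r^2;V)V)$, uniformly in $x,x',r$. This is exactly what the sharpened pointwise estimates of Theorem~\ref{thm:pointwise_est}, valid with exponent $\theta<1/2$ throughout the transition region $\{V\simeq E_n\}$, together with the logarithmic Bohr--Sommerfeld spacing of Proposition~\ref{prp:BS_log}, are designed to furnish, and it is the point at which the argument genuinely goes beyond \cite{DM2}.
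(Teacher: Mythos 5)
Your outline gets the diagonal part of the argument right, and this is indeed where the paper's new ingredients enter: after passing to the fibre decomposition in $\xi$ and changing variables from the coupling constant to the spectral parameter via Proposition~\ref{prp:inv_eigenvalue}, the term in which only $\mm$ (or $\mm'$) is evaluated along the eigenvalue curves is controlled exactly by Theorem~\ref{thm:new_estimate}, whose hypothesis is supplied by Theorem~\ref{thm:pointwise_est} and whose proof uses Proposition~\ref{prp:BS_log} and Lemma~\ref{lem:sum_int}. The paper does precisely this, yielding $\|\mm\|_2^2$ and $\|\mm'\|_2^2$ for the two diagonal matrices $\matM_1,\matM_2$.

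The genuine gap is in your ``second part'', where the eigenfunction products rather than $\mm$ are differenced. Controlling $\psi_n(x;\Xi_n(\lambda)V)\psi_n(x';\Xi_n(\lambda)V)-\psi_n(x;\Xi_n(\lambda')V)\psi_n(x';\Xi_n(\lambda')V)$ requires a bound on the variation of the eigenfunctions with respect to the coupling constant $\tau$, i.e.\ on $\tau\partial_\tau\psi_n(\cdot;\tau V)$. None of the tools you cite provides this: Theorem~\ref{thm:pointwise_est} bounds the \emph{size} of $\psi_n$ and $\psi_n'$ (derivative in $x$, not in $\tau$), Proposition~\ref{prp:BS_log} bounds eigenvalue spacings, and Lemma~\ref{lem:sum_int} is a summation device. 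The missing input is the perturbation-theoretic matrix $\matA_{nm}(\tau V)=\langle\tau\partial_\tau\psi_n(\cdot;\tau V),\psi_m(\cdot;\tau V)\rangle$ together with its off-diagonal decay $|\matA_{nm}|\lesssim(1+|m-n|^{1+\epsilon})^{-1}$ from \cite[Proposition 8.1 and Theorem 8.4]{DM2}; this is what makes the near-diagonal term summable (after inserting the maximal function of $\mm'$) and reduces it back to a diagonal bound. Without it your splitting cannot be closed. Relatedly, the paper does not run a Gagliardo/fractional-difference argument in $\mu$ at all: it proves two \emph{integer-weight} estimates (weights $1$ and $(y-y')^2$), in which the $\tau$-derivative appears only once and the far-from-diagonal term is handled with $\|\mm\|_\infty\lesssim\|\mm\|_2+\|\mm'\|_2$ by first-order Sobolev embedding, and then obtains the fractional weight $|y-y'|^{2\vartheta}$ and the norm $\|\mm\|_{\sobolev{\vartheta}{2}}$ by interpolating the two bounds and summing over the dyadic parameter $A$. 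So your concern that $\sobolev{\vartheta}{2}\not\hookrightarrow L^\infty$ forces one to avoid $L^\infty$ everywhere is misplaced; what must be avoided is using $\|\mm\|_\infty$ in the \emph{unweighted} estimate, and that is achieved by Theorem~\ref{thm:new_estimate}.
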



\begin{proof}[Proof of Theorem \ref{thm:weightedplancherel_l2}]
We follow the set-up and notation of \cite[Section 9]{DM2}, but assume additionally that
 $\supp \mm \subseteq [1/4,1]$.
For $A \in \Rpos$, define $G_A(\lambda,\tau) = \mm(\lambda) \chi(A\tau)$, where $\chi \in C^\infty_c([1/4,1])$ is such that $\sum_{j \in \ZZ} \chi(2^j \cdot) = 1$ on $\Rpos$, and 
 set $K_{G_A} = \Kern_{G_A(\opL,-\partial_y^2)}$. Then
\begin{equation}\label{eq:weighted_basic}
\begin{aligned}
\int_{\RR^2} |K_{G_A}(z',z)|^2 \,dz' &\lesssim A^{-1/2} \int_{A^{-1}}^{4A^{-1}} \| \matM_1(\tau) . \vec\psi(x;\tau V) \|^2 \frac{d\tau}{\tau} \\
\int_{\RR^2} (y'-y)^2 \, |K_{G_A}(z',z)|^2 \,dz' &\lesssim A^{1/2} \sum_{j=1}^4 \int_{A^{-1}}^{4A^{-1}} \| \matM_j(\tau) . \vec\psi(x;\tau V) \|^2 \frac{d\tau}{\tau}
\end{aligned}
\end{equation}
(see \cite[eqs.\ (9.11) and (9.15)]{DM2}), where
\begin{align*}
\matM_1(\tau) &= \diag (\mm (\vec E(\tau V))),\\
\matM_2(\tau) &= \diag \mm'(\vec{E}(\tau V)) \schur \diag \vec{F}(\tau V), \\
\matM_3(\tau) &= \matN \schur \matA(\tau V) \schur \inc{\mm(\vec{E}(\tau V))} , \\
\matM_4(\tau) &= \matF \schur \matA(\tau V) \schur \inc{\mm(\vec{E}(\tau V))} . 
\end{align*}
Here $\vec E(\tau V) = (E_n(\tau V))_n$, $\vec F(\tau V) = (\tau \partial_\tau E_n(\tau V))_n$, $\vec \psi(\cdot;\tau V) = (\psi_n(\cdot;\tau V))_n$; the matrices $\matA(\tau V)$, $\matP(\tau V)$, $\matN$, $\matF$, are given by $\matA_{nm}(\tau V) = \langle \tau \partial_\tau \psi_n(\cdot;\tau V), \psi_m(\cdot;\tau V) \rangle$, $\matP_{nm}(\tau V) = \langle \tau V \psi_n(\cdot; \tau V), \psi_m(\cdot;\tau V) \rangle$, $\matN_{nm} = \chr_{n/2 \leq m \leq 2n}$, $\matF_{nm} = \chr_{n > 2m} + \chr_{m > 2n}$; moreover $\odot$ is the Schur product between matrices, $\|\cdot\|$ is the $\ell^2$ norm, and
\[
\diag \vec f = (f_n \delta_{nm})_{n,m}, \qquad \inc \vec f = (f_n-f_m)_{n,m}.
\]

In the proof of \cite[Theorem 9.1]{DM2}, the integrals in $\frac{d\tau}{\tau}$ in \eqref{eq:weighted_basic} are bounded by the corresponding suprema, which eventually results in estimates involving $L^\infty$ Sobolev norms of $\mm$. In order to obtain sharper estimates with $L^2$ Sobolev norms, here instead we crucially take advantage of the integration in $\tau$.

Let us first consider the term involving the diagonal matrix $\matM_1(\tau)$:
\[\begin{split}
&\int_{A^{-1}}^{4A^{-1}} \| \matM_1(\tau) . \vec\psi(x;\tau V) \|^2 \frac{d\tau}{\tau} \\
&= \int_{A^{-1}}^{4A^{-1}} \sum_n |\mm(E_n(\tau V))|^2 \psi_n(x;\tau V)^2 \frac{d\tau}{\tau} \\
&= \int_0^\infty |\mm(\lambda)|^2 \sum_{n \tc  1 / \Xi_n(\lambda) \in [A/4,A]} \psi_n(x;\Xi_n(\lambda) V)^2 \frac{\lambda \Xi_n'(\lambda)}{\Xi_n(\lambda)} \frac{d\lambda}{\lambda} \\
&\lesssim_{\kappa} \int_0^\infty |\mm(\lambda)|^2 \sum_{n \tc  \lambda / \Xi_n(\lambda) \in [A/16,A]} \psi_n(x;\Xi_n(\lambda) V)^2  \frac{d\lambda}{\lambda}
\end{split}\]
by Proposition \ref{prp:inv_eigenvalue}. In light of Theorem \ref{thm:pointwise_est}\ref{en:pointwise_est_new} and Remark \ref{rem:hoelder_uc}, we can apply Theorem \ref{thm:new_estimate} with $\tilde\pot = \pot_{1+\theta}(\kappa)$ to bound the above sum, and deduce that
\begin{equation}\label{eq:est_diagonal}
\int_{A^{-1}}^{4A^{-1}} \| \matM_1(\tau) . \vec\psi(x;\tau V) \|^2 \frac{d\tau}{\tau} \lesssim_{\theta,\kappa} \|\mm\|_2^2 \, (\chr_{V \leq 4A} + e^{-c |x|} \chr_{V > 4A}).
\end{equation}

As observed in \cite[Section 9.3]{DM2}, $\matM_2(\tau)$ is also a diagonal matrix, with diagonal entry $\mm'(E_n(\tau V)) \tau \partial_\tau E_n(\tau V)$, and
\[
|\mm'(E_n(\tau V)) \tau \partial_\tau E_n(\tau V)| \leq |\widetilde \mm (E_n(\tau))|,
\]
where $\widetilde m(\lambda) = \lambda m'(\lambda)$ (see Proposition \ref{prp:inv_eigenvalue}). So the same argument as above, with $\widetilde\mm$ in place of $\mm$, yields
\begin{equation}\label{eq:est_diagonal2}
\int_{A^{-1}}^{4A^{-1}} \| \matM_2(\tau) . \vec\psi(x;\tau V) \|^2 \frac{d\tau}{\tau} \lesssim_{\theta,\kappa} \|\mm'\|_2^2 \, (\chr_{V \leq 4A} + e^{-c |x|} \chr_{V > 4A}).
\end{equation}

We now deal with the ``near-diagonal'' term $\matM_3(\tau)$. As discussed in \cite[Section 9.5]{DM2}, the absolute value of the $(n,m)$ entry of $\matM_3(\tau)$ is
\[
\chr_{n/2 \leq m \leq 2n} \, |\matA_{nm}(\tau V)| \, |\mm(E_n(\tau V)) - \mm(E_m(\tau V))|.
\]
Note that, since $\supp \mm \subseteq [1/4,1]$ and $E_n(\tau V) \simeq_{\kappa} E_m(\tau V)$ for $n/2 \leq m \leq 2 n$, there is $S = S(\kappa) \geq 1$ such that the above entry vanishes unless $E_m(\tau V) \in [S^{-1},S]$. Now,
\[\begin{split}
| \mm(E_n(\tau V)) - \mm(E_m(\tau V)) | &= \left| \int_{E_m(\tau V)}^{E_n(\tau V)} \mm'(\lambda) \,d\lambda \right| \\
&\leq |E_n(\tau V) - E_m(\tau V)| \, \HLM (\mm') (E_m(\tau V)),
\end{split}\]
where $\HLM$ denotes the uncentred Hardy--Littlewood maximal function on $\RR$. Consequently, if we set $\widehat m = \chr_{[S^{-1},S]} \HLM \mm'$, then
\[\begin{split}
&\chr_{n/2 \leq m \leq 2n} \, |\matA_{nm}(\tau V)| \, |\mm(E_n(\tau V)) - \mm(E_m(\tau V))| \\
 &\leq \chr_{n/2 \leq m \leq 2n} \, |\matP_{nm}(\tau V)| \,  \widehat\mm(E_m(\tau V))  \\
&\lesssim_{\kappa,\theta} \frac{1}{1+|m-n|^{1+\epsilon}} \,  \widehat\mm(E_m(\tau V)) ,
\end{split}\]
where $\epsilon = \epsilon(\kappa,\theta)$, and we applied \cite[Proposition 8.1 and Theorem 8.4]{DM2}. Since the matrix $((1+|m-n|^{1+\epsilon})^{-1})_{n,m \geq 1}$ is $\ell^2$-bounded, we conclude that
\[
\| \matM_3(\tau) . \vec\psi(x;\tau V) \| \lesssim_{\kappa,\theta} \| \diag(\widehat\mm(\vec E(\tau V))) . \vec\psi(x;\tau V) \|.
\]
So, the same argument that proves \eqref{eq:est_diagonal}, applied with $\widehat \mm$ in place of $\mm$, yields, for some $T_1 = T_1(\kappa)$,
\begin{equation}\label{eq:est_neardiag}
\begin{split}
\int_{4A^{-1}}^{A^{-1}} \| \matM_3(\tau) . \vec\psi(x;\tau V) \|^2 \frac{d\tau}{\tau} 
&\lesssim_{\theta,\kappa} \| \widehat\mm \|_2^2  \, (\chr_{V \leq T_1 A} + e^{-c |x|} \chr_{V > T_1 A}) \\
&\lesssim \| \mm' \|_2^2  \, (\chr_{V \leq T_1 A} + e^{-c |x|} \chr_{V > T_1 A}),
\end{split}
\end{equation}
where the last bound follows from the $L^2$-boundedness of $\HLM$.

Finally, from \cite[eq.\ (9.19)]{DM2}, we already know that
\begin{equation}\label{eq:est_fardiag}
\begin{split}
\int_{A^{-1}}^{4A^{-1}} \| \matM_4(\tau) . \vec\psi(x;\tau V) \| \frac{d\tau}{\tau} &\lesssim_{\kappa} \|\mm\|_\infty^2 (\chr_{V \leq T_2 A} + e^{-c |x|} \chr_{V > T_2 A}) \\
&\lesssim (\|\mm\|_2 + \|\mm'\|_2)^2 (\chr_{V \leq T_2 A} + e^{-c |x|} \chr_{V > T_2 A})
\end{split}
\end{equation}
for some $T_2 = T_2(\kappa) \geq 4$, where the last estimate follows from Sobolev's embedding.

In conclusion, from \eqref{eq:weighted_basic}, \eqref{eq:est_diagonal}, \eqref{eq:est_diagonal2}, \eqref{eq:est_neardiag} and \eqref{eq:est_fardiag}, we deduce that
\[\begin{aligned}
\int_{\RR^2} |K_{G_A}(z',z)|^2 \,dz' &\lesssim_\kappa A^{-1/2} \| \mm \|_2^2  \, (\chr_{V \leq T_3 A} + e^{-c |x|} \chr_{V > T_3 A}), \\
\int_{\RR^2} (y'-y)^2 \, |K_{G_A}(z',z)|^2 \,dz' &\lesssim_{\kappa,\theta} A^{1/2} (\| \mm \|_2 + \|\mm'\|_2)^2  \, (\chr_{V \leq T_3 A} + e^{-c |x|} \chr_{V > T_3 A}),
\end{aligned}\]
where $T_3 = \max\{T_1,T_2\}$.
These two estimates are analogous to the ones stated at the beginning of \cite[Section 9.6]{DM2}, with $L^2$ norms of $\mm$ and $\mm'$ instead of $L^\infty$ norms. As in \cite[Section 9.6]{DM2}, by interpolating these two estimates and then summing them for $A = 2^j$, $j \in \ZZ$, one eventually deduces that, for all $\vartheta \in [0,1/2)$,
\[
\left(\int_{\RR^2} |y-y'|^{2\vartheta} |\Kern_{m(\opL)}(z',z)|^2 \,dz'\right)^{1/2} \lesssim_{\theta,\kappa,\vartheta} \|m\|_{\sobolev{2}{\vartheta}} \max\{V(1),V(x)\}^{\frac{\vartheta}{2}-\frac{1}{4}},
\]
which is the case $r=1$ of the estimate in Theorem \ref{thm:weightedplancherel_l2}. The estimate for arbitrary $r>0$ follows by rescaling, that is, by replacing $V(x)$ with $V_r(x) = r^2 V(rx)$, as explained at the end of \cite[Section 9.6]{DM2}.
\end{proof}

\end{document}